\newtheorem{theorem}{Theorem}[section]
\newtheorem{proposition}[theorem]{Proposition}
\newtheorem{lemma}[theorem]{Lemma}
\newtheorem {corollary}[theorem]{Corollary}
\theoremstyle {definition}
\newtheorem {definition}[theorem]{Definition}
\newtheorem {example}[theorem]{Example}
\theoremstyle {remark}
\newtheorem{remark}[theorem]{Remark}
\def\grad{\operatorname{grad}}
\def\supp{\mathrm{supp}}
\dedicatory{Dedicated to Professor Ha Huy Vui on the occasion of his sixtieth anniversary.}
\keywords{global Milnor fibration, bifurcation value, atypical value, Malgrange condition, M-tame, Euler characteristic. \\}
\begin{document}

\title{On the topology of rational functions in two complex variables}
\author{Nguyen Tat Thang}

\address{Nguyen Tat Thang,
Institute of Mathematics, 18 Hoang Quoc Viet road, 10307 Hanoi, Vietnam.}
\email{ntthang@math.ac.vn}

\begin{abstract}
 We give some characterizations for the critical values at infinity of a rational function in two complex variables in terms of the Euler characteristic, the Malgrange condition and the M-tameness.
\end{abstract}
\maketitle

%%%%%%%%%%%%%%%%%%%%%%%%%%%%%%%%%%%

\section{Introduction}
Let $F$ be a rational function in $n$ complex variables. It is well-known that $F$ is a locally trivial fibration outside some finite subsets of $\mathbb C$ (see [T1]). The smallest such subset is called the bifurcation value set of $F$ and is denoted by $B(F)$. A natural question is how to compute this set $B(F)$.

We recall the definition of the so-called critical values at infinity (or atypical values) of a rational function.
\begin{definition}
A value $t_0\in\mathbb C$ is called a {\it regular value at infinity} of $F$ if there is a positive real number $\delta>0$ and a compact subset $K\subset \mathbb C^n$ such that the restriction
$$F: F^{-1}(D_\delta(t_0))\setminus K \rightarrow D_\delta(t_0),$$
is a $C^\infty$-trivial fibration, where $D_\delta(t_0):=\{t\in \mathbb C: |t-t_0|<\delta\}$. 

If $t_0\in \mathbb C$ is not a regular value at infinity, we call it a {\it critical value at infinity} (or {\it atypical value}) of the rational function $F$. Denote the set of those critical values at infinity of $F$ by $B_\infty(F)$.
\end{definition}

Obviously $B(F)$ contains the set $K_0(F)$ of the critical values and the set $B_\infty(F)$, $B(F)\supseteq B_\infty(F)\cup K_0(F)$.

The aim of this article is to study the sets $B(F)$ and $B_\infty(F)$ of a rational function $F$ in two complex variables, that is, the case $n=2$.

Let $f, g\in \mathbb C[x, y]$ be two non-zero polynomials without common factors and set $F=f/g$. If $\deg(g)=0$, or equivalently $F$ is a polynomial function, then one can prove that $B(F)=B_\infty(F)\cup K_0(F)$. Our first result is the following theorem.

\begin{theorem}\label{thm12} Assume that $\deg(f)>\deg(g)$. We have
$$B(F)=B_\infty(F)\cup K_0(F)\cup K_1(F).$$
\end{theorem}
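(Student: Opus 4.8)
The plan is to prove the two inclusions separately, the inclusion $B_\infty(F)\cup K_0(F)\cup K_1(F)\subseteq B(F)$ being the easy one. We have $K_0(F)\cup B_\infty(F)\subseteq B(F)$ already, and for $K_1(F)$ — which we understand as the set of values $t_0$ such that the germ of $F$ at some point $p$ of the affine indeterminacy locus $I=\{f=g=0\}$ fails to be a $C^\infty$-trivial fibration over small discs around $t_0$, equivalently such that the Euler characteristic of the local fibre $F^{-1}(t)\cap B_p$ on a small ball $B_p$ around $p$ jumps at $t_0$ — one argues that $t_0\in K_1(F)$ forces $t_0\in B(F)$: if $F$ were $C^\infty$-trivial over a disc $D_\delta(t_0)$, then, the trivialisation being a product near each point of $I$ (as in the construction below), every local fibre $F^{-1}(t)\cap B_q$ with $q\in I$ would have $t$-independent Euler characteristic, contradicting $t_0\in K_1(F)$.

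For the substantial inclusion $B(F)\subseteq B_\infty(F)\cup K_0(F)\cup K_1(F)$ I would fix $t_0\notin B_\infty(F)\cup K_0(F)\cup K_1(F)$ and exhibit, for some small $\delta>0$, a $C^\infty$-trivialisation of $F\colon F^{-1}(D_\delta(t_0))\to D_\delta(t_0)$, which shows $t_0\notin B(F)$. The trivialisation is assembled from three local models. Since $t_0\notin B_\infty(F)$ there are $\delta_\infty>0$ and a compact $K_\infty\subset\bC^2$ such that $F$ is $C^\infty$-trivial over $D_{\delta_\infty}(t_0)$ outside $K_\infty$; fix a closed ball $K\supset K_\infty$ containing all the (finitely many) points of $I$ in its interior. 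Since $t_0\notin K_1(F)$, around each $p\in I$ there are a closed ball $B_p$ in the interior of $K$ and $\delta_p>0$ such that $F$ is $C^\infty$-trivial over $D_{\delta_p}(t_0)$ on $\bigl(B_p\cap F^{-1}(D_{\delta_p}(t_0))\bigr)\setminus g^{-1}(0)$. Because $|F|\to\infty$ along $g^{-1}(0)\setminus I$, a compactness argument shows that after shrinking $\delta$ below $\min(\delta_\infty,\min_p\delta_p)$ the set $F^{-1}(D_\delta(t_0))\cap\bigl(K\setminus\bigcup_p B_p\bigr)$ stays at positive distance from $g^{-1}(0)$, hence is relatively compact in $\bC^2\setminus g^{-1}(0)$; on it $F$ has no critical points because $t_0\notin K_0(F)$, so one can lift the coordinate vector field $\partial/\partial t$ of $D_\delta(t_0)$ there, while on $F^{-1}(D_\delta(t_0))\setminus K$ and on each $B_p\cap F^{-1}(D_\delta(t_0))$ the trivialisations already provide such lifts, with flows confined to those sets for all times staying in $D_\delta(t_0)$.

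The gluing step is then standard: pick a partition of unity subordinate to the open cover of $F^{-1}(D_\delta(t_0))$ by a neighbourhood of infinity, the balls $B_p$, and the compact core, and patch the three lifts into a single $C^\infty$ vector field $v$ on $F^{-1}(D_\delta(t_0))$ with $dF(v)=\partial/\partial t$. One checks $v$ is complete: a trajectory lies over a segment of $D_\delta(t_0)$, so it cannot leave through the target, and it cannot reach $g^{-1}(0)$ or escape to infinity in finite time, since to do so it would have to enter a small neighbourhood of some $p\in I$ — where $v$ coincides with the confining field of the local trivialisation at $p$ — or a neighbourhood of infinity — where $v$ coincides with the confining field there; on the remaining, overlapping region it stays in a compact subset of $\bC^2\setminus g^{-1}(0)$ on which $v$ is bounded. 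Integrating $v$ produces the trivialisation, in the spirit of Ehresmann's fibration theorem, and finishes this inclusion.

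The main technical obstacle is the behaviour near the indeterminacy locus $I$: one must make precise that "$t_0\notin K_1(F)$" supplies a local trivialisation on $B_p\setminus g^{-1}(0)$ whose flow does not drift into $g^{-1}(0)$, and one must be able to separate the three regions cleanly for small $\delta$. This is exactly where the hypothesis $\deg f>\deg g$ enters: homogenising and viewing $F$ as the pencil $\{\bar f-t\,z^{\deg f-\deg g}\bar g=0\}$ in $\mathbb P^2$, the condition $\deg f>\deg g$ guarantees that the line at infinity lies entirely over $\infty\in\mathbb P^1$, with no finite value spilling over from $\{z=0\}$, so that no fourth source of atypical values appears and $B_\infty(F)$ accounts for all of the behaviour at infinity; the remaining points are then absorbed by $K_0(F)$ and $K_1(F)$ as above. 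The routine verifications — transversality of $\partial K$ and $\partial B_p$ to the fibres, the positive-distance estimate, and completeness of $v$ on the overlaps — I would leave to a lemma.
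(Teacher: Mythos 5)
Your outline of the hard inclusion (three regions, local trivialisations, partition of unity, integrate a lift of $\partial/\partial t$) matches the paper's strategy, but the proof has a genuine gap at its centre: you have silently replaced the paper's definition of $K_1(F)$ by a different one. In the paper, $K_1(F)$ is the set of $t_0\notin K_0(F)$ for which the \emph{local Milnor number} $\mu_p(f-tg)$ at some $p\in A(F)$ jumps at $t=t_0$; you ``understand'' it instead as the set where the local fibration near $A(F)$ fails to be $C^\infty$-trivial (equivalently, where a local Euler characteristic jumps). With your reading, the step ``$t_0\notin K_1(F)$ supplies a $C^\infty$-trivialisation on $B_p$'' is a tautology, but the theorem as stated is about the Milnor-number set, and the passage from ``$\mu_p(f-tg)$ is constant in $t$'' to ``$F$ is $C^\infty$-trivial over a small disc on a neighbourhood of $p$'' is exactly the non-trivial content. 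The paper obtains it from the L\^e--Ramanujam--Timourian theorem (constancy of the Milnor number implies a continuous family of homeomorphisms of the local fibres, valid here since $n=2\neq 3$), and then must still upgrade the resulting topological product to a $C^\infty$-trivialisation; this upgrade is done via Meigniez's criteria (open maps that are local homotopic submersions are homotopic submersions; a surjective submersive fibration with two-dimensional fibres is locally $C^\infty$-trivial). None of this is present or replaceable by a routine lemma in your write-up.

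The easy inclusion has a second, independent problem. You argue that if $F$ is $C^\infty$-trivial over $D_\delta(t_0)$ then ``the trivialisation being a product near each point of $I$'' the local Euler characteristics at points of $I$ are $t$-independent; but a global fibre-preserving diffeomorphism $F^{-1}(D)\cong F^{-1}(t_0)\times D$ need not carry $F^{-1}(t)\cap B_q$ to $F^{-1}(t_0)\cap B_q$, so nothing localises. The paper's route is global: triviality gives $\chi(F^{-1}(t))=\chi(F^{-1}(t_0))$, and then the formula $\chi(\overline{V_t})=\chi(V)+\sum_p\mu_p(G)$ together with Mayer--Vietoris expresses $\chi(F^{-1}(t))-\chi(F^{-1}(t_0))$ as a sum of differences of local Milnor numbers at $A(F)$ and at $V_\infty$; upper semicontinuity of each summand then forces every local Milnor number to be constant, whence $t_0\notin K_1(F)$. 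This is also where the hypothesis $\deg f>\deg g$ actually enters (it makes $d_t$ and the set of points at infinity independent of $t$, so the bookkeeping closes up) --- not in the ``no finite value spills over the line at infinity'' heuristic you give, which in any case is already subsumed in the definition of $B_\infty(F)$. To repair the proposal you would need to (a) prove the equivalence of your $K_1$ with the Milnor-number $K_1$ in both directions, importing L\^e--Ramanujam--Timourian and a smoothing device for one direction and the semicontinuity-plus-Euler-characteristic computation for the other, and (b) replace the localisation claim in the easy inclusion by that global computation.
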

Here $K_1(F)$ is a subset of $\mathbb C$ which is defined in Section 2 and by using the Milnor number at in-determinant point of $F$.

Our second result is about the set of critical values at infinity $B_\infty(F)$. In the case of polynomial functions, there has been several interesting characterizations of this set. One of those via Euler characteristic is due to H\`a Huy Vui - L\^e D\~ung Tr\'ang [HL] and Suzuki [S1].
\begin{theorem}{\rm (\cite{HL}, \cite{S})}\label{thm13}
Let $F$ be a polynomial function in two complex variable and $t_0\in \mathbb C$ be a regular value of $F$. Then $t_0\in  B_\infty(F)$ if and only if the Euler characteristic of the fiber $F^{-1}(t)$ is not a constant in every neighborhood of $t_0$.
\end{theorem}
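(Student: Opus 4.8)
The plan is to prove the two implications separately, after a preliminary reduction. Since $K_0(F)$ and $B_\infty(F)$ are both finite, fix $\delta>0$ so small that $\overline{D_\delta(t_0)}$ meets $K_0(F)\cup B_\infty(F)$ in at most the point $t_0$. One implication is easy: if $t_0\notin B_\infty(F)$ then, because for a polynomial one has $B(F)=K_0(F)\cup B_\infty(F)$ (the equality recalled just before Theorem~\ref{thm12} in the case $\deg g=0$, which comes from gluing the $C^\infty$-trivialisation at infinity furnished by $t_0\notin B_\infty(F)$ to the Ehresmann trivialisation over a large closed ball furnished by the hypothesis that $t_0$ is a regular value), the map $F\colon F^{-1}(D_\delta(t_0))\to D_\delta(t_0)$ is a locally trivial fibration, so its fibres are all diffeomorphic and $\chi(F^{-1}(t))$ is constant on $D_\delta(t_0)$. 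Applying this at each point of the punctured disc $D_\delta(t_0)\setminus\{t_0\}$ (every such point being regular and non-atypical) and using connectedness, $\chi(F^{-1}(t))$ equals one constant $\chi_\delta$ for all $t\neq t_0$ in the disc; hence the assertion ``$\chi(F^{-1}(\cdot))$ is not locally constant at $t_0$'' is just ``$\chi(F^{-1}(t_0))\neq\chi_\delta$'', and the theorem becomes the equivalence
$$t_0\in B_\infty(F)\iff \chi(F^{-1}(t_0))\neq\chi_\delta,$$
the implication $\Leftarrow$ of which we have just proved.

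For the remaining implication I would show that a defect at infinity strictly raises the Euler characteristic, by passing to a good compactification. Resolving the indeterminacy of the rational map $\mathbb P^2\dashrightarrow\mathbb P^1$ extending $F$ by a good sequence of blow-ups $\pi\colon X\to\mathbb P^2$, one gets a morphism $\bar F:=F\circ\pi\colon X\to\mathbb P^1$ and a normal-crossings divisor at infinity $D:=\pi^{-1}(L_\infty)$, and, since $F^{-1}(t)=\bar F^{-1}(t)\setminus D$,
$$\chi(F^{-1}(t))=\chi(\bar F^{-1}(t))-\chi(\bar F^{-1}(t)\cap D)\qquad(\star)$$
for all $t\in\mathbb C$; moreover the fibres $\bar F^{-1}(t)$, $t\in\mathbb C$, meet $D$ only near the finitely many base points of the pencil, where the whole analysis localises. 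For $t\neq t_0$ in the disc, $\bar F^{-1}(t)$ is smooth and $\bar F^{-1}(t)\cap D$ is a fixed finite point set moving along the dicritical components; as $t\to t_0$ the only possible degenerations (recall $t_0$ is a regular value of $F$, so the affine part is unaffected) are collisions of these points --- ramification of $\bar F$ along a dicritical component over $t_0$ --- and singularities of $\bar F^{-1}(t_0)$, or vertical components of $D$, appearing in $\bar F^{-1}(t_0)$; and $t_0$ being atypical at infinity is \emph{exactly} the occurrence of at least one of these. A local computation in a small tube around each base point --- of the Euler characteristic of the local Milnor fibre ``at infinity'' of $\bar F$, in the spirit of the A'Campo and L\^e formulas --- should show that its contribution to the right-hand side of $(\star)$ at $t=t_0$ is never smaller than for nearby $t$, and is strictly larger exactly when the corresponding local fibration is non-trivial over $D_\delta(t_0)$. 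Summing over the base points would give $\chi(F^{-1}(t_0))=\chi_\delta+\lambda(t_0)$ with $\lambda(t_0)\in\mathbb Z_{\geq 0}$ and $\lambda(t_0)>0\iff t_0\in B_\infty(F)$, i.e. the implication $\Rightarrow$.

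The main obstacle is precisely this local statement: that each place at infinity contributes a non-negative ``number of vanishing cycles at infinity'', vanishing iff its Milnor fibration is trivial over $D_\delta(t_0)$, together with the converse patching of ``trivial at every place at infinity'' into ``$t_0\notin B_\infty(F)$'' (once more an Ehresmann-type gluing of tube trivialisations). This requires controlling the fine structure of the resolution graph of the pencil at infinity and the semicontinuity, in the family $\{\bar F^{-1}(t)\}_t$, of the $\delta$-invariants of the curve germs lying on $D$. An alternative route, which is the one taken in \cite{S}, avoids the global resolution altogether: write $\chi(F^{-1}(t))$ through the classical degree--genus--singularities formula for the projective closure of $F^{-1}(t)$ in $\mathbb P^2$, separate the affine singular points from those on $L_\infty$, and derive $\chi(F^{-1}(t_0))\geq\chi_\delta$ together with the equality case from the semicontinuity of the Milnor numbers of the germs of $F^{-1}(t)$ at $L_\infty$ as $t\to t_0$; the Lefschetz-pencil and monodromy argument of \cite{HL} gives yet another proof of the same inequality.
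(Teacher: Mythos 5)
Your first implication (if $t_0\notin B_\infty(F)$ then $\chi(F^{-1}(t))$ is locally constant at $t_0$) is fine: gluing the trivialisation at infinity to the Ehresmann trivialisation over a large ball is exactly the patching carried out in detail in the proof of Lemma~\ref{thrmbifur-huuty1}, and the reduction to a single constant $\chi_\delta$ on the punctured disc is correct. The hard direction, however, is only a programme, and the entire burden of the theorem is concentrated in the step you yourself flag as ``the main obstacle'': that each place at infinity contributes a non-negative local defect to $\chi(F^{-1}(t_0))-\chi_\delta$ which is strictly positive \emph{exactly} when the local fibration there is non-trivial. You assert this with ``should show'' and never prove it; moreover your intermediate claim that ``$t_0$ being atypical at infinity is exactly the occurrence of'' a ramification, a collision, or a vertical component in the resolution is itself a non-trivial theorem in both directions (in particular the converse --- that triviality of every local model glues to a $C^\infty$-trivialisation of $F$ outside a compact set, so that $t_0\notin B_\infty(F)$ --- is deferred to an unperformed ``Ehresmann-type gluing of tube trivialisations''). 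Pointing to \cite{S} and \cite{HL} for complete arguments does not close the gap. So what is missing is precisely the content of the theorem: both the strict inequality $\chi(F^{-1}(t_0))>\chi_\delta$ in the atypical case and the triviality at infinity in the equality case.

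For comparison, the route this paper takes for its rational-function generalisation (Theorems~\ref{thrm4.2.7} and~\ref{thrm4.2.10}, which specialise to the polynomial case) avoids resolution of indeterminacy altogether: one projects by a generic linear form $L(x,y)=y$ so that $L_t=L_{|V_t}$ is a proper degree-$d$ branched cover (Lemmas~\ref{lm3.1}--\ref{lm3.2}), splits the critical points of $l_t$ into a bounded family and a family escaping to infinity as $t\to t_0$ (Remark~\ref{rm4.1}), and computes $\chi(F^{-1}(t_0))-\chi(F^{-1}(t))=\sum_i r_i$ as the number, with multiplicity, of escaping critical points via Riemann--Hurwitz counts and Mayer--Vietoris (Lemma~\ref{lm4.2.6}); the absence of escaping critical points is then shown directly to yield a trivialisation at infinity. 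That argument supplies, in elementary form, exactly the two quantitative facts your sketch leaves open, which is why the projection approach is preferable here to the compactification one.
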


Another characterization of the set $B_\infty(F)$ is by the Fedoryuk condition, Malgrange condition and the M-tameness. Recall that for a rational function $F$, we denote by $\widetilde K_\infty(F)$ the set of $t\in \mathbb C$ such that there exists a sequence $\{x_k\}_k\subset \mathbb C^n$, $x_k\to \infty$, such that $F(x_k)\to t$ and $||\mathrm{grad} F(x_k)||\to 0$. We say that $F$ satisfies {\it Fedoryuk condition} at a value $t\in \mathbb C$ if $t\not\in\widetilde K_\infty(F)$. If, in addition, we require that $||x_k||.||\mathrm{grad} F(x_k)||\to 0$, then we get a subset $K_\infty(F)\subseteq \widetilde K_\infty(F)$. We say that $F$ satisfies {\it Malgrange condition} at a value $t\in \mathbb C$ if $t\not\in K_\infty(F)$.

Let $M_\infty(F)$ denote the set of values $t\in\mathbb C$ such that there are sequences $\{\lambda_k\}_k\subset \mathbb C$ and $\{x_k\}_k\subset \mathbb C^n$ with $x_k\to \infty$, $F(x_k)\to t$ and $\mathrm{grad} F(x_k)=\lambda_k x_k$ for all $k=0, 1, 2, \ldots$. We say that the rational function $F$ is M-tame at a value $t\in \mathbb C$ if $t\not\in M_\infty(F)$. The set $M_\infty(F)$ of a rational function has been studied in a recent paper of Arnaud Bodin and Anne Pichon where they prove that a non-zero value $t_0\in \mathbb C$ is in $M_\infty(F)$ if and only if outside a large compact  set of $\mathbb C^2$, the topological type of the curve $F^{-1}(t)$ is a constant for all $t$ near $t_0$. 

For the case of polynomial functions in two variables, we have the following characterizations of the set $B_\infty(F)$ due to H\`a Huy Vui and Ishikawa.
\begin{theorem}{\rm (\cite{H}, \cite{I})}\label{thm14}
Let $F:\mathbb C^2\rightarrow \mathbb C$ be a polynomial function and $t\in \mathbb C$. The followings are equivalent:

(i) $t\in B_\infty(F)$;

(ii) $t\in \widetilde K_\infty(F)$;

(iii) $t\in K_\infty(F)$;

(iv) $t\in M_\infty(F)$.
\end{theorem}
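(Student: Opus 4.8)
The plan is to show that each of (i)--(iv) is equivalent to a single combinatorial condition on a resolution of the pencil of projective curves attached to $F$; the four equivalences then drop out at once. Here $F$ is a polynomial of degree $d$ with leading form $f_d$. First I would dispose of the one free implication: since $\|x_k\|\to\infty$, failure of the Malgrange condition at $t$ forces $\|\grad F(x_k)\|\to0$, so $K_\infty(F)\subseteq\widetilde K_\infty(F)$ and $(iii)\Rightarrow(ii)$ for free. Everything else is to be routed through the geometry at infinity.

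Homogenise $F$ and consider in $\mathbb P^2$ the pencil $\{C_t\}_{t\in\mathbb P^1}$ generated by $\widetilde f(x,y,z)=z^df(x/z,y/z)$ and $z^d$; its base locus is $\{z=0,\ f_d=0\}\subset L_\infty:=\{z=0\}$. Let $\sigma\colon X\to\mathbb P^2$ be a resolution of this pencil, so that the proper transforms of the $C_t$ are the fibres of a morphism $\widehat F\colon X\to\mathbb P^1$ extending $F$, and put $D:=\sigma^{-1}(L_\infty)$. Call an exceptional component \emph{dicritical} if $\widehat F$ is non-constant on it (it then meets every fibre) and \emph{vertical} otherwise (it lies in a single fibre). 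The master condition is
\[
(\star_t)\colon\quad \widehat F^{-1}(t)\ \text{contains a vertical exceptional component.}
\]
I would then prove $(i)\Leftrightarrow(\star_t)$: since $F^{-1}(s)=\widehat F^{-1}(s)\setminus(\widehat F^{-1}(s)\cap D)$ and $\widehat F^{-1}(s)\cap D$ consists of the finitely many transverse intersections of the proper transform of $C_s$ with the dicritical components together with the vertical components over $s$ and their crossings, additivity of $\chi$ expresses $\chi(F^{-1}(s))$ as $\chi(\widehat F^{-1}(s))$ minus a correction that is independent of $s$ exactly when no vertical component lies over $s$ --- this is the at-infinity part of Suzuki's Euler-characteristic formula \cite{S} --- so Theorem~\ref{thm13} (together with the refinement that separates the affine and the at-infinity contributions to the jump of $\chi$, needed when $t$ is also a critical value of $F$) gives $t\in B_\infty(F)\iff(\star_t)$.

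Next, assuming $(\star_t)$, I would fix a vertical component $E\subset\widehat F^{-1}(t)$, take a holomorphic arc in $X$ meeting $E$ transversally at a general point, and push it down by $\sigma$ to an analytic arc $\gamma(\delta)=(x(\delta),y(\delta))$ in $\mathbb C^2$, $\delta\to0$, with $\|\gamma(\delta)\|\to\infty$ and $F(\gamma(\delta))\to t$. The order of $F-t$ along $E$ equals the multiplicity $m_E\ge2$ of $E$ in $(\widehat F-t)$, and comparing $m_E$ with the orders along $E$ of $\partial F/\partial x$, $\partial F/\partial y$ and of the Jacobian of $\sigma$ --- all readable off the dual graph --- should yield $\|\gamma(\delta)\|\cdot\|\grad F(\gamma(\delta))\|\to0$, hence $t\in K_\infty(F)\subseteq\widetilde K_\infty(F)$, so $(ii)$ and $(iii)$. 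Along these arcs the tangent direction becomes radial, which already gives the qualitative picture of $(iv)$; to obtain the exact relation $\grad F=\lambda x$ one passes to the branch at infinity of the polar curve $\Gamma=\{\,y\,\partial F/\partial x-x\,\partial F/\partial y=0\,\}$ that specialises into $E$. Such a branch exists: $E$ being vertical, $\widehat F$ restricted to a transversal is ramified over $t$, so $\widehat F$ has a critical locus meeting $E$, and since the Euler field $x\partial_x+y\partial_y$ is transverse to $L_\infty$ at a general point, near $E$ the curve $\Gamma$ differs from a polar curve of $\widehat F$ by a unit and hence meets $E$; running $\gamma$ along this branch gives points with $\grad F(x_k)=\lambda_k x_k$, $x_k\to\infty$, $F(x_k)\to t$, i.e.\ $t\in M_\infty(F)$.

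For the converses $(ii)\Rightarrow(\star_t)$ and $(iv)\Rightarrow(\star_t)$ I would lift a sequence witnessing failure of the Fedoryuk condition at $t$ (resp.\ lying on $\Gamma$ with $F\to t$) to $X$; by compactness a subsequence has a limit $q\in D$ with $\widehat F(q)=t$. If $q$ lay on no vertical component, then in a normal-crossings chart at $q$ adapted to $D$ one has $\widehat F=t+(\text{unit})\cdot(\text{monomial})$, and expanding $\grad F$ and $\Gamma$ back on $\mathbb C^2$ through $\sigma$ shows that $\|\grad F\|$ is bounded away from $0$ near infinity over a small disc about $t$ and that $\Gamma$ does not accumulate on $q$ --- contradicting in either case the choice of sequence. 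Hence $q$ lies on a vertical component and $(\star_t)$ holds. (The same normal-crossings description plus Ehresmann's Fibration Theorem on $X$, after deleting a neighbourhood of the dicritical components, reproves $(\star_t)\text{ fails}\Rightarrow t\notin B_\infty(F)$ independently of Theorem~\ref{thm13}.) The hard part will be the Puiseux/valuation bookkeeping of the middle step, in particular extracting the \emph{strong} Malgrange estimate carrying the extra factor $\|x\|$ rather than merely $\|\grad F\|\to0$: this is exactly where $n=2$ is used, since a vertical component arises from finitely many point blow-ups, so the relevant divisorial valuations live on a two-dimensional regular local ring and are controlled completely by the dual graph, making the orders of $F-t$ and of the partial derivatives comparable in the way needed; for $n\ge3$ that comparison breaks down and (ii), (iii), (iv) genuinely separate.
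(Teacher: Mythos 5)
The paper does not actually prove Theorem \ref{thm14}: it is quoted from \cite{H} and \cite{I}, and the machinery the paper develops for the rational analogue (Section 3) runs through a generic linear projection and the relative polar curve of the localized family $G(y,z,t)$ via the L\^e--Saito theorem (Theorem \ref{thrmLS}), not through a resolution of the pencil at infinity. Your global strategy --- reducing all four conditions to the combinatorial condition $(\star_t)$ on vertical components of a resolution --- is a legitimate alternative route in two variables, and the logical architecture (closing the cycle $(\star_t)\Rightarrow(iii)\Rightarrow(ii)\Rightarrow(\star_t)$ and $(\star_t)\Rightarrow(iv)\Rightarrow(\star_t)$) would indeed yield all the equivalences if each arrow were established.

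However, the central step $(\star_t)\Rightarrow(iii)$ rests on a false claim: a generic arc meeting a vertical component $E$ transversally at a general point does not, in general, witness the failure of the Malgrange condition --- it need not even witness the failure of the Fedoryuk condition. Take $F(x,y)=x(xy-1)$ and $t=0$. The relevant vertical component over $0$ is reached by the arcs $x=c/y$, $y\to\infty$, where $c$ is the coordinate along $E$; along such an arc $F\to 0$ but $\grad F=(2xy-1,\,x^2)\to(2c-1,\,0)$, which for generic $c$ is bounded away from $0$, so that $\|(x,y)\|\cdot\|\grad F(x,y)\|\to\infty$. The divisorial order of $\partial F/\partial x$ along $E$ is $0$, so the dual-graph bookkeeping you propose, carried out correctly, refutes rather than proves the estimate. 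A witness sequence exists only along the special value $c=1/2$, i.e.\ on the branch at infinity of the polar curve $\{\partial F/\partial x=0\}$: there $\|(x,y)\|\cdot\|\grad F\|\sim 1/(4|y|)\to 0$. In other words, the passage to the polar curve that you reserve for condition $(iv)$ is already indispensable for $(ii)$ and $(iii)$, and the strong estimate $\|x\|\cdot\|\grad F(x)\|\to 0$ along such a branch is precisely the nontrivial {\L}ojasiewicz-type inequality that constitutes the content of \cite{H} (and, in the paper's rational setting, of Lemmas \ref{lm4.2.17} and \ref{lm4.3} and Theorem \ref{thm4.1}, all derived from Theorem \ref{thrmLS}). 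A secondary caveat: in your equivalence $(i)\Leftrightarrow(\star_t)$, the ``correction term'' subtracted from $\chi(\widehat F^{-1}(s))$ also involves the number of intersection points of the strict transform of $C_s$ with the dicritical components, which can drop when $\widehat F$ restricted to a dicritical component ramifies over $s$; this contribution has to be analyzed rather than declared independent of $s$.
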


Our second result in this article is a generalization of Theorems \ref{thm13} and \ref{thm14} to the case of rational functions. We will show that under some wild assumptions, the critical values at infinity of rational functions in two variables can be determined in terms of the Euler characteristic, the Malgrange condition and the M-tameness (Theorems \ref{thrm2.1}, \ref{thrm2.2} and \ref{main2}). Moreover, we give examples showing that the Fedoryuk condition can not characterize the critical values at infinity of those functions.

The article consists of three sections. Theorem \ref{thm12} is proved in Section 2. Section 3 is devoted to a generalization of Theorems \ref{thm13} and \ref{thm14} for rational functions in two complex variables as mentioned above. The main results of this section are Theorems \ref{thrm2.1}, \ref{thrm2.2} and \ref{main2}. 

%%%%%%%%%%%%%%%%%%%%%%%%%%%%%%%%%

\section{The bifurcation set}
In this section we give some descriptions for the set of bifurcation values of a rational function in two complex variables.

Let $F= f/g: \mathbb{C}^2\setminus\{g= 0\}\to \mathbb{C}$ be a rational function, where $f, g\in \mathbb{C}[x, y]$ have no common factor. Let
$$A(F):=\{(x, y)\in \Bbb{C}^2 : f(x, y)=g(x, y)=0\}.$$
For each $t\in \Bbb{C}$ set $$d_t:= \deg (f- tg),$$
$$ V_t:= \{(x, y)\in \mathbb{C}^2: f(x, y)-tg(x, y) =0\},$$
$$G(x, y, z, t):= z^{d_t}f(x/z, y/z) - tz^{d_t}g(x/z, y/z)$$
 and $$\overline{V_t}:= \{[x : y : z]\in \mathbb{C}P^2 : G(x, y, z, t)=0\}.$$ 
Let $V^t_{\infty}= \overline{V_t}\cap H_{\infty}$ be set of points at infinity of $\overline{V_t}$.
\begin{remark}{\rm
(i) $A(F)$ contains finitely many points.

(ii) For all $t\in \Bbb{C}$ we have $A(F)\subset \{f-tg=0\}.$ Moreover, if $t_0$ is a regular value of $F$ and $t$ is near $t_0$ enough then every point $p\in A(F)$ is either a regular point or an isolated singular point of the curve $V_t$.
}
\end{remark}

\begin{definition}{\rm
We denote by $K_1(F)$ the set of $t_0\in \mathbb{C}\setminus K_0(F)$ such that there exist $p\in A(F)$ and $\mu_p(f-t_0g)\neq \mu_p(f-tg)$ for all $t\neq t_0$ near $t_0$ enough, where $\mu_p(f-tg)$ is the Milnor number of $f-tg$ at $p$.
}
\end{definition}

\begin{remark}{\rm For each curve $V\subset \mathbb{C}^2$ we denote by $\textrm{Sing}V$ the set of singular points of $V$. Then $t_0\notin K_1(F)$ if $\textrm{Sing}\{f-t_0g=0\} \cap A(F) = \emptyset$.
}\end{remark}

\begin{lemma}\label{thrmbifur-huuty1}
Let $F:= \frac{f}{g}\colon \Bbb{C}^2\setminus\{g= 0\}\to \Bbb{C}$ be a rational function, where $f, g\in \Bbb{C}[x, y]$ have no common factor. Then
$$B(F)\subset K_0(F)\cup B_{\infty}(F)\cup K_1(F).$$
\end{lemma}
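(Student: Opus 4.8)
The plan is to prove the contrapositive: if $t_0\in\mathbb C$ lies outside $K_0(F)\cup B_\infty(F)\cup K_1(F)$, then $F$ restricts to a $C^\infty$ locally trivial fibration over some small disc $D_\delta(t_0)$, and hence $t_0\notin B(F)$. Each of the three sets is finite: $K_0(F)$ and $B_\infty(F)$ are finite (see the Introduction), and $K_1(F)$ is finite because $A(F)$ is a finite set and, for each $p\in A(F)$, the Milnor number $\mu_p(f-tg)$ is upper semicontinuous in $t$, so it is generically constant and jumps only at finitely many values. I therefore fix $\delta>0$ so small that $\overline{D_\delta(t_0)}$ meets none of these sets. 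Then $F$ has no critical point on $F^{-1}(D_\delta(t_0))$, and for every $p\in A(F)$ the Milnor number $\mu_p(f-tg)$ is one and the same finite integer for all $t\in D_\delta(t_0)$; in particular $f-tg$ has an isolated singularity (or is smooth) at $p$ for all such $t$.

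I will build the trivialization by exhibiting on $F^{-1}(D_\delta(t_0))$ a $C^\infty$ vector field projecting to $\partial/\partial t$ on $D_\delta(t_0)$ and integrating its flow; the field is produced on the members of a suitable open cover and glued by a partition of unity, which is legitimate since a convex combination of lifts of $\partial/\partial t$ is again a lift. Near infinity: since $t_0\notin B_\infty(F)$, after shrinking $\delta$ there is a compact $K\subset\mathbb C^2$ with $F\colon F^{-1}(D_\delta(t_0))\setminus K\to D_\delta(t_0)$ a $C^\infty$ trivial fibration, which supplies a product vector field on $F^{-1}(D_\delta(t_0))\setminus K$. Away from infinity and away from $A(F)$: on the part of $F^{-1}(D_\delta(t_0))$ lying in $K$ but outside small balls $B_p$ around the points of $A(F)$, the map $F$ is a proper submersion onto $D_\delta(t_0)$, so a standard application of Ehresmann's Fibration Theorem (equivalently, lifting $\partial/\partial t$ through a Riemannian metric) gives a vector field there.

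The remaining, and crucial, region is a small ball $B_p$ around a point $p\in A(F)$. Here one uses that, for $B_p$ small enough, $F^{-1}(t)\cap B_p=(V_t\cap B_p)\setminus\{p\}$ for every $t$: indeed $(V_t\cap B_p)\setminus F^{-1}(t)=\{q\in B_p : f(q)=g(q)=0\}=A(F)\cap B_p=\{p\}$, while for $q\neq p$ close to $p$ one has $g(q)\neq 0$, so $q\in V_t$ forces $F(q)=f(q)/g(q)=t$. Consequently, under the fibre-preserving identification $q\leftrightarrow(q,F(q))$, the restriction of $F$ to $F^{-1}(D_\delta(t_0))\cap B_p$ is the projection to $D_\delta(t_0)$ of the total space of the family of plane curve germs $\{(V_t,p)\}_{t\in D_\delta(t_0)}$ with the section $\{p\}\times D_\delta(t_0)$ deleted. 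Since $\mu_p(f-tg)$ is constant on $D_\delta(t_0)$, this family of plane curve germs is equisingular (constancy of the Milnor number of a plane curve singularity forces constancy of the embedded topological type), hence topologically trivial by an ambient isotopy fixing the section $\{p\}\times D_\delta(t_0)$; deleting that section, and combining the isotopy with the submersivity of $F$ near $p$, yields a $C^\infty$ vector field on $F^{-1}(D_\delta(t_0))\cap B_p$ that lifts $\partial/\partial t$ and whose trajectories do not run into $p$.

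Finally I glue the vector fields obtained on $F^{-1}(D_\delta(t_0))\setminus K$, on the middle region, and on each $F^{-1}(D_\delta(t_0))\cap B_p$ by a subordinate partition of unity; the resulting $C^\infty$ field still lifts $\partial/\partial t$, and its flow is complete over $D_\delta(t_0)$ because near infinity and near each $p$ it coincides with a product field, so no trajectory escapes. Integrating it over $D_\delta(t_0)$ produces a $C^\infty$ trivialization of $F$ over $D_\delta(t_0)$, so $t_0\notin B(F)$. The main obstacle is precisely the analysis inside $B_p$: converting the numerical hypothesis $t_0\notin K_1(F)$ into an honest local trivialization of $F$ near the indeterminacy points that is compatible with deleting the section $\{p\}\times D_\delta(t_0)$ and that can be matched, in the gluing, with the product structures coming from $B_\infty$-regularity. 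The chain ``$\mu$-constancy $\Rightarrow$ equisingularity $\Rightarrow$ ambient isotopy'', together with checking completeness of the glued flow, is where the real work lies; the pieces near infinity and away from $A(F)$ are routine.
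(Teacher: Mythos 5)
Your overall strategy---decompose $F^{-1}(D_\delta(t_0))$ into a neighbourhood of infinity, small balls around the points of $A(F)$, and the remaining compact piece; lift $\partial/\partial t$ on each piece; glue by a partition of unity and integrate---is exactly the paper's, and your treatment of the region at infinity and of the middle region agrees with it. The gap is inside the ball $B_p$ around a point $p\in A(F)$, which you yourself identify as the crux but do not actually close. From the constancy of $\mu_p(f-tg)$ the L\^e--Ramanujam/Timourian theorem gives only a \emph{continuous} family of homeomorphisms of the germs $(V_t,p)$ fixing $p$ (topological equisingularity). A continuous ambient isotopy does not produce a $C^\infty$ vector field, so your sentence ``combining the isotopy with the submersivity of $F$ near $p$ yields a $C^\infty$ vector field that lifts $\partial/\partial t$'' is exactly the assertion that has to be proved, not a consequence of what precedes it. Note that $F$ restricted to $F^{-1}(D_\delta(t_0))\cap B_p$ is a submersion but is not proper (its fibres are punctured curve pieces that also exit through $\partial B_p$), so Ehresmann's theorem does not apply there, and a naive lift such as $\grad F/\|\grad F\|^2$ has incomplete flow as one approaches $p$.

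The paper closes precisely this gap with Meigniez's results: the continuous trivialization coming from $\mu$-constancy exhibits $F|_{F^{-1}(D)\cap B(p)}$ as fibrewise homeomorphic to a product projection, whence it is a Hurewicz fibration by Lemma \ref{lm1.3}; it is also a submersion; and a surjective submersive fibration whose fibres have real dimension $2$ is locally $C^\infty$-trivial by Lemma \ref{lm1.4}. Only the smooth product structure $\Psi^1_p$ obtained this way supplies the lifting vector field $\partial(\Psi^1_p)^{-1}/\partial s$ near $p$, and the dimension hypothesis (the fibres are surfaces) is essential---it is the reason the argument is confined to $n=2$. Unless you either invoke this machinery or substitute something equivalent (e.g.\ Whitney equisingularity of $\mu$-constant families of plane curve germs together with a controlled lifting vector field that is smooth on the open stratum), your construction in $B_p$ is incomplete, and with it the whole proof.
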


To prove the Lemma, we need the following results.

\begin{theorem}{\rm (\cite{Timourian1977, LR})}\label{thrmLe}
Let $g_s\colon (\Bbb{C}^n, 0)\to (\Bbb{C}, 0), n\neq 3, s\in \mathbb{R}^m$ be a differential family of holomorphic germs such that for all $s$ the origin $0\in \Bbb{C}^n$ is an isolated singular point of $g_s$. Assume that $\mu_0(g_s)=\mu_0(g_0)$ for all $s$ near $0\in \mathbb{R}^m$ enough. Then, there exist a neighborhood $D$ of  $0\in \Bbb{R}^m$, a small ball $B$ centered at the origin and a continuous family of homeomorphisms
$$\Phi_s\colon g_s^{-1}(0)\cap B\to g_0^{-1}(0)\cap B, s\in D$$ 
such that $\Phi_s(0)=0$.
\end{theorem}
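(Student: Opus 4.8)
The plan is to prove this by the strategy of L\^e--Ramanujam (and Timourian): reduce the multiparameter statement to a one-parameter $\mu$-constant family, trivialize the associated family of Milnor fibrations by means of the h-cobordism theorem, and then recover the assertion about the zero sets $g_s^{-1}(0)$ from Milnor's conical structure theorem. First I would reduce to the case $m=1$, with the parameter ranging over an interval. Since the desired conclusion is local near $0\in\mathbb{R}^m$ and asks only for a \emph{continuous} family of homeomorphisms, it suffices to establish topological triviality along each path $\tau\mapsto s(\tau)$ joining $0$ to a nearby parameter and then, using the uniformity of the construction, to assemble these trivializations into a continuous family over a neighborhood $D$. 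Thus the essential content is the one-parameter family $g_\tau$, $\tau\in[0,1]$, with $\mu_0(g_\tau)\equiv\mu$ constant.

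For this one-parameter family I would first fix \emph{uniform} Milnor data: a ball $B=B_\epsilon$ and a disk $D_\eta\subset\mathbb{C}$ such that, for every $\tau$, the restriction $g_\tau\colon B_\epsilon\cap g_\tau^{-1}(D_\eta\setminus\{0\})\to D_\eta\setminus\{0\}$ realizes the Milnor fibration of $g_\tau$. Constancy of $\mu_0(g_\tau)$ is exactly what guarantees that $\epsilon$ and $\eta$ can be chosen independently of $\tau$: it forces the critical locus of the map $(x,\tau)\mapsto(g_\tau(x),\tau)$ to be precisely $\{0\}\times[0,1]$, so that no singularity splits off or escapes to the boundary sphere as $\tau$ varies and the transversality of $g_\tau^{-1}(0)$ with $\partial B_\epsilon$ persists. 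Away from the singular locus $\{0\}\times[0,1]$ the relevant projections to the $\tau$-axis are proper submersions, so Ehresmann's theorem trivializes the family of links $K_\tau=g_\tau^{-1}(0)\cap\partial B_\epsilon$; the only obstruction to trivializing the whole Milnor fibration is concentrated at the cone point.

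Here is the heart of the argument, and the step I expect to be the main obstacle. One assembles the total space of Milnor fibers over $[0,1]$ into a compact manifold $\mathcal{M}$ whose ends are the fibers $F_0$ and $F_1$, and one must show that $\mathcal{M}$ is an \emph{h-cobordism} from $F_0$ to $F_1$ relative to the already-trivialized boundary. The constancy of $\mu$ supplies precisely the homotopy-theoretic input: each Milnor fiber has the homotopy type of a wedge $\bigvee_\mu S^{n-1}$, and one checks that the inclusions $F_0\hookrightarrow\mathcal{M}$ and $F_1\hookrightarrow\mathcal{M}$ are homotopy equivalences with vanishing Whitehead torsion. I would then invoke the h-cobordism theorem to conclude $\mathcal{M}\cong F_0\times[0,1]$, which trivializes the family of Milnor fibrations and hence the family of embedded pairs $(S_\epsilon,K_\tau)$. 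It is exactly at this point that the hypothesis $n\neq3$ enters: for $n\geq4$ the fibers are simply connected manifolds of real dimension $2(n-1)\geq6$, so the h-cobordism theorem applies, whereas the excluded value $n=3$ gives four-dimensional fibers, the dimension in which the theorem is unavailable. The low-dimensional cases $n=1$ (trivial) and $n=2$ (plane curves, where $\mu$-constancy and topological triviality are classically equivalent by equisingularity theory) are disposed of separately, which is why the single excluded value is $n=3$.

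Finally, from the product structure I would recover the stated conclusion. By Milnor's conical structure theorem each $g_\tau^{-1}(0)\cap B$ is homeomorphic to the cone over its link $K_\tau$, and the trivialization above yields a locally trivial family of links; coning these homeomorphisms while keeping the cone vertex fixed produces a continuous family of homeomorphisms $\Phi_\tau\colon g_\tau^{-1}(0)\cap B\to g_0^{-1}(0)\cap B$ with $\Phi_\tau(0)=0$. Reassembling over the neighborhood $D$ furnished by the reduction step gives the required family $\Phi_s$. The delicate points throughout are the uniformity of the Milnor radii and the verification that $\mathcal{M}$ is genuinely an h-cobordism; both rest on the sole hypothesis that $\mu_0(g_s)$ is constant.
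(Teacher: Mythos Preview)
The paper does not give a proof of this statement; it is quoted from \cite{Timourian1977,LR} and used as a black box in the proof of Lemma~\ref{thrmbifur-huuty1}. Your outline is essentially the L\^e--Ramanujam/Timourian argument from those references: uniform Milnor radii from $\mu$-constancy, an h-cobordism argument on the family of Milnor fibers for $n\geq 4$ (fibers simply connected of real dimension $\geq 6$), a separate equisingularity argument for $n=2$, and recovery of the zero-set statement via the conical structure. That is the correct strategy and the role of the hypothesis $n\neq 3$ is identified correctly.

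Two points in your sketch would need tightening if you were writing this out in full. First, the reduction to $m=1$ by ``trivializing along each path and then assembling'' is not quite how the multiparameter case is obtained: path-wise trivializations do not automatically glue to a continuous family over a neighborhood $D\subset\mathbb{R}^m$, and Timourian's argument proceeds instead by producing a topological product structure on the total space directly (via a controlled vector field/isotopy argument) rather than by reduction to arcs. Second, the mention of ``vanishing Whitehead torsion'' is superfluous in the simply connected regime, since $\mathrm{Wh}(1)=0$; what actually needs checking is that the inclusions of the two ends into the cobordism are homotopy equivalences, which follows from the $\mu$-constant hypothesis together with the bouquet description of the Milnor fiber. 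Neither of these is a fatal gap, but both would have to be addressed in a self-contained proof.
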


Let $h : X \to Y$ be a continuous map. A {\it homotopy} of $h$ is a continuous map $H : X\times [0; 1] \to Y$ such that $H(x; 0) = h(x)$ for all $x\in X$.

\begin{definition}{\rm
The continuous map $\pi: E \to B$ is called a {\it fibration}, or equivalently, {\it has homotopy lifting property}, if for all polytopes $X$ and for any continuous map $h : X \to E$, every homotopy $\Phi $ of $\pi \circ  h$ can be lifted to a homotopy of $h$, i.e. there exists a homotopy $H$ of $h$ such that the diagram
$$ \xymatrix{
   &E\ar[d]^{\pi}\\
   X\times [0; 1]\ar[ur]^{H}\ar[r]_{\Phi} &B}$$
 commutes.
}
\end{definition}
% The followings are adapted from the ones in \cite{M}. 

\begin{definition}{\rm (\cite{M})
Let $X, Y$ be topology spaces. Two homotopies
$$H, H^{'}: X\times [0, 1] \to Y$$
are called to {\it have the same germ} if they coincide in a neighborhood of $X \times \{0\}$.
}
\end{definition}

\begin{definition}{\rm (\cite{M})
The continuous map $\pi: E \to B$ is called a {\it homotopic submersion}, or equivalently say that it has the
{\it germ-of-homotopy lifting property}, if for every polytope $X$ and every continuous map $h : X \to E$ every germ-of-homotopy of $\pi \circ h$ lifts to a germ-of-homotopy for $h$.
}
\end{definition}

\begin{definition}{\rm (\cite{M})
The continuous map $\pi: E \to B$ is called a {\it local homotopic submersion} if for every $x\in E$ there is a neighborhood $U(x)\subset E$ such that the restriction $\pi_{|U(x)}$ is a homotopic submersion from $U(x)$ onto $\pi(U(x))$.
}
\end{definition}

\begin{lemma}\label{lm1.1}{\rm (\cite{M}, Lemma 6)} Let $\pi: E \to B$ be an open  continuous map. Assume that $\pi $ is a local homotopic submersion. Then $\pi$ is a homotopic submersion.
\end{lemma}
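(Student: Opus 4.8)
The plan is to prove the statement by the classical ``lift over the skeleta of a triangulation'' technique, reducing the global germ-of-homotopy lifting problem to the local one that is assumed. Fix a polytope $X$, a continuous map $h\colon X\to E$, and a homotopy $\Phi\colon X\times[0,1]\to B$ of $\pi\circ h$; I must produce a germ-of-homotopy $H$ of $h$ with $\pi\circ H=\Phi$ near $X\times\{0\}$. First I would use the hypothesis: for each $x\in X$ choose an open neighborhood $U_x\subset E$ of $h(x)$ on which $\pi|_{U_x}\colon U_x\to\pi(U_x)$ is a homotopic submersion, and note that $\pi(U_x)$ is open in $B$ because $\pi$ is an open map (this is precisely where that hypothesis enters: it guarantees that the base-homotopy germ near $x\times\{0\}$ lands inside $\pi(U_x)$). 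The sets $h^{-1}(U_x)$ form an open cover of $X$, so I may fix a triangulation of $X$, subdividing if necessary, so fine that every closed simplex $\sigma$ lies in some $h^{-1}(U_{x(\sigma)})$; in particular $h(\sigma)\subset U_{x(\sigma)}$.

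Next I would build $H$ by induction on the skeleta $X^{(k)}$, with the inductive hypothesis that after stage $k$ the map $H$ is defined and continuous on an open neighborhood of $X^{(k)}\times\{0\}$ in $X^{(k)}\times[0,1]$, equals $h$ on $X^{(k)}\times\{0\}$, and satisfies $\pi\circ H=\Phi$ there. For the inductive step let $\sigma$ be a $k$-simplex. The lift $H$ is already defined on $\partial\sigma\times[0,\delta)$ for some $\delta>0$ and must equal $h$ on $\sigma\times\{0\}$; these data agree on $\partial\sigma\times\{0\}$, so they glue to a continuous map on the collar $C_\delta:=(\sigma\times\{0\})\cup(\partial\sigma\times[0,\delta))$ into $E$, lying over $\Phi$. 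Since $H(\sigma\times\{0\})=h(\sigma)\subset U_{x(\sigma)}$ and $U_{x(\sigma)}$ is open, after shrinking $\delta$ I may assume $H(C_\delta)\subset U_{x(\sigma)}$ and also, by continuity of $\Phi$ along $\sigma\times\{0\}$ together with the openness of $\pi(U_{x(\sigma)})$, that $\Phi(\sigma\times[0,\delta))\subset\pi(U_{x(\sigma)})$. Now I would invoke the classical homeomorphism of pairs $(\sigma\times[0,\delta],\,C_\delta)\cong(\sigma\times[0,\delta],\,\sigma\times\{0\})$ that pushes the collar down onto the base: transporting the data through it converts ``extend $H|_{C_\delta}$ compatibly with $\Phi$'' into a genuine germ-of-homotopy lifting problem for the polytope $\sigma$ and the map $\pi|_{U_{x(\sigma)}}\colon U_{x(\sigma)}\to\pi(U_{x(\sigma)})$. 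The latter is a homotopic submersion, so the problem is solvable, and transporting the solution back extends $H$ over $\sigma\times[0,\delta_\sigma)$ for some $\delta_\sigma>0$.

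Finally I would assemble the pieces: performing this extension for every $k$-simplex and letting $k$ run through all dimensions produces a map $H$ whose domain, intersected with each closed simplex times $[0,1]$, is a neighborhood of that simplex times $\{0\}$; since the triangulation is locally finite, this domain is an open neighborhood of $X\times\{0\}$ in $X\times[0,1]$, and $H$ is continuous on it because it is continuous on each member of the locally finite closed cover by simplices and the extensions were built to agree on overlaps. Hence $H$ is the required germ-of-homotopy lift, and $\pi$ is a homotopic submersion. The hard part will be exactly the inductive step: making precise the compatibility of the lift over a simplex with the lift over its boundary and with the initial data $h$, and recognizing this combined constraint, through the collar homeomorphism, as another instance of the very germ-lifting property that is assumed locally; a secondary technical nuisance is the bookkeeping of the shrinking parameters $\delta$ and $\delta_\sigma$ and the check that, for a possibly non-compact polytope $X$, they still assemble into an honest neighborhood of $X\times\{0\}$.
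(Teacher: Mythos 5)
The paper does not actually prove this lemma: it is imported verbatim from Meigniez \cite{M} (Lemma 6 there), so there is no internal argument to compare yours against. Judged on its own, your skeletal-induction proof is correct in outline and is essentially the classical local-to-global argument for (germ-of-)homotopy lifting: subdivide $X$ so finely that $h$ carries each closed simplex into a chart $U_{x(\sigma)}$, lift inductively over the skeleta, and use the homeomorphism of pairs $\bigl(\sigma\times[0,\delta],\ \sigma\times\{0\}\cup\partial\sigma\times[0,\delta]\bigr)\cong\bigl(\sigma\times[0,\delta],\ \sigma\times\{0\}\bigr)$ to convert the relative extension problem into a fresh germ-lifting problem for $\pi|_{U_{x(\sigma)}}$. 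You also correctly isolate the one place where openness of $\pi$ is indispensable, namely to ensure $\pi(U_{x(\sigma)})$ is open so that $\Phi(\sigma\times[0,\delta))\subset\pi(U_{x(\sigma)})$ after shrinking $\delta$. Two small points you flagged do need the standard patches: first, the transported base homotopy $\Phi'$ need not map all of $\sigma\times[0,1]$ into $\pi(U_{x(\sigma)})$, so replace it by $(x,t)\mapsto\Phi'(x,\min(t,\eta))$, which has the same germ and lands in $\pi(U_{x(\sigma)})$ for small $\eta$; second, for non-compact $X$ the heights $\delta_\sigma$ need not be bounded below, but $x\mapsto\delta_{\mathrm{car}(x)}$ is positive and lower semicontinuous, so paracompactness yields a continuous $\epsilon>0$ below it, and $H(x,\min(t,\epsilon(x)/2))$ is an honest homotopy representing the desired germ. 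With those two routine repairs the plan goes through.
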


It deduces from Lemma \ref{lm1.1} that
\begin{lemma}\label{lm1.2}
Let $f:V_1 \to V_2$ be a differential map. Assume that $f$ is a submersion. Then $f$ is a homotopic submersion.
\end{lemma}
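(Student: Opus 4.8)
The plan is to deduce this from Lemma \ref{lm1.1}. Since being a homotopic submersion is, for open continuous maps, a local-to-global property, it suffices to check two things: that a submersion $f\colon V_1\to V_2$ is open, and that it is a local homotopic submersion. Both will follow from the local normal form of a submersion.

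First I would recall the local submersion theorem, a consequence of the inverse (resp. implicit) function theorem: around any point $p\in V_1$ there exist coordinate charts $\varphi\colon U(p)\xrightarrow{\sim} U\times W\subseteq \bR^{m-k}\times\bR^{k}$ and $\psi\colon U(f(p))\xrightarrow{\sim} W$ in which $f$ becomes the standard projection $\pr\colon U\times W\to W$, $(u,w)\mapsto w$. In particular $f$ is an open map. Next I would verify that $\pr\colon U\times W\to W$ is a homotopic submersion; since composition with the diffeomorphisms $\varphi,\psi$ preserves the germ-of-homotopy lifting property, this shows that the restriction of $f$ to $U(p)$ is a homotopic submersion, i.e. $f$ is a local homotopic submersion. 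So let $X$ be a polytope, $h\colon X\to U\times W$ a continuous map, and $\Phi\colon \mathcal O\to W$ a germ-of-homotopy of $\pr\circ h$ defined on a neighborhood $\mathcal O$ of $X\times\{0\}$ in $X\times[0,1]$. Put $H(x,t):=(\pr_1 h(x),\,\Phi(x,t))$. Then $H(x,0)=(\pr_1 h(x),\pr h(x))=h(x)$ and $\pr\circ H=\Phi$ on $\mathcal O$; shrinking $\mathcal O$ if necessary (using that $h(X)$ is a compact subset of the open set $U\times W$) we may assume $H(\mathcal O)\subseteq U\times W$, so $H$ is the desired germ-of-homotopy lift.

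Combining these two facts with Lemma \ref{lm1.1} yields that $f$ is a homotopic submersion, as claimed. The only slightly delicate point is the bookkeeping with germs: the lift $H$ must remain inside the distinguished chart, which is why it is defined only on a possibly smaller neighborhood of $X\times\{0\}$ — precisely what the notion ``germ-of-homotopy'' is designed to accommodate — and compactness of the polytope $X$ makes the required shrinking uniform.
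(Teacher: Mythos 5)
Your proof is correct and follows exactly the route the paper intends: the paper gives no written proof, merely asserting that the lemma ``deduces from Lemma \ref{lm1.1},'' and your argument --- local normal form of a submersion, openness, explicit lifting for the standard projection, then Lemma \ref{lm1.1} --- is the standard way to carry out that deduction. Note only that since $\Phi$ takes values in $\pi(U(p))\cong W$ by the definition of local homotopic submersion, your lift $H$ already lands in $U\times W$, so the shrinking of $\mathcal O$ is not actually needed.
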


\begin{lemma}\label{lm1.3}{\rm (\cite{M})}  In the following commutative diagram of continuous maps,

\begin{displaymath}
\xymatrix{
E \ar[dr]^{\pi}  \ar[r]^{h}  &  E^{'}  \ar[d]^{\pi^{'}}  \\
           &  B }
\end{displaymath}
assume that $\pi $ and $\pi^{'} $ are surjective homotopic submersions. If $\pi^{'}$ is a fibration and  for every $b\in B$, the restriction $h_b:=h_{|\pi^{-1}(b)}: \pi^{-1}(b)\to  \pi^{'-1}(b)$ is a weak homotopy equivalence, then $\pi$ is a fibration.
\end{lemma}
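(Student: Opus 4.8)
The plan is to verify the defining homotopy lifting property of $\pi$ against an arbitrary polytope by first using the fibration $\pi'$ to transport the problem into $E'$, thereby reducing everything to showing that the single map $h\colon E\to E'$ is itself a fibration, and then proving the latter by an open–closed argument in the time variable that exploits the germ–of–homotopy lifting property of $\pi$ together with the fibrewise weak equivalences $h_b$.

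First I would carry out the reduction. Let $X$ be a polytope, $h_0\colon X\to E$ a map and $\Phi\colon X\times[0,1]\to B$ a homotopy of $\pi\circ h_0$. Then $\Phi$ is equally a homotopy of $\pi'\circ(h\circ h_0)$, so, $\pi'$ being a fibration, it lifts to a homotopy $\Phi'\colon X\times[0,1]\to E'$ with $\Phi'(\,\cdot\,,0)=h\circ h_0$ and $\pi'\circ\Phi'=\Phi$. If $H\colon X\times[0,1]\to E$ is any homotopy of $h_0$ with $h\circ H=\Phi'$, then automatically $\pi\circ H=\pi'\circ h\circ H=\pi'\circ\Phi'=\Phi$; hence it is enough to lift the homotopy $\Phi'$ of $h_0$ through $h$. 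Since $X$ was arbitrary, what must be shown is that $h$ has the homotopy lifting property against polytopes, and as usual it suffices to lift against the inclusions $D^n\times\{0\}\hookrightarrow D^n\times[0,1]$.

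Next I would solve this lifting problem for $h$. Given $f\colon D^n\to E$ and $\Phi'\colon D^n\times[0,1]\to E'$ with $h\circ f=\Phi'(\,\cdot\,,0)$, set $\Phi:=\pi'\circ\Phi'$, so that $\pi\circ f=\Phi(\,\cdot\,,0)$, and consider the set $S$ of $s\in[0,1]$ for which there is a lift $H_s\colon D^n\times[0,s]\to E$ of $\Phi'|_{D^n\times[0,s]}$ through $h$ with $H_s(\,\cdot\,,0)=f$. To see that $S$ is open, note that $H_s(\,\cdot\,,s)$ lifts $\Phi(\,\cdot\,,s)$ through $\pi$, so the germ–of–homotopy lifting property of $\pi$, applied with initial time $s$ after a shift of the time parameter, yields a lift $G$ of $\Phi|_{D^n\times[s,s+\delta)}$ through $\pi$ with $G(\,\cdot\,,s)=H_s(\,\cdot\,,s)$; then $h\circ G$ and $\Phi'|_{D^n\times[s,s+\delta)}$ are two lifts of $\Phi|_{[s,s+\delta)}$ through the fibration $\pi'$ agreeing at time $s$, hence are joined by a fibrewise homotopy rel time $s$ coming from uniqueness of path–lifting for $\pi'$, and lifting this fibrewise homotopy through $h$ — here using that each $h_b$ is a weak homotopy equivalence — corrects $G$ to a lift of $\Phi'$ through $h$ still extending $H_s$, so that $[0,s+\delta)\subset S$. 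Closedness of $S$ is obtained by a compactness argument of the same flavour: a lift defined over a half–open interval $[0,s)$ is patched with a germ–lift of $\Phi'$ through $h$ near time $s$, whose initial datum at time $s$ is first obtained by lifting $\Phi'(\,\cdot\,,s)$ through $h$ up to homotopy and rectifying, the overlap again being reconciled through the $h_b$. Since $0\in S$ this gives $S=[0,1]$ and the desired global lift $H$, so $h$, and therefore $\pi$, is a fibration.

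The main obstacle is the correction step, in which a fibrewise homotopy in $E'$ must be lifted through $h$ knowing only that the $h_b$ are weak homotopy equivalences, not fibrations: one cannot lift homotopies through a weak equivalence on the nose, and must instead argue cell by cell over $D^n$ and over a fine subdivision of the time interval, invoking Whitehead's theorem — legitimate because polytopes are CW — to lift the relevant maps up to homotopy, and the homotopy extension property to rectify, all the while keeping the construction continuous in the point of $B$. This is the technical core of the argument; the reduction via $\pi'$ and the open–closed scaffolding are routine by comparison. The hypothesis that $\pi$ and $\pi'$ are surjective enters only in ensuring that the fibrewise comparison $h_b$ is meaningful over every point of $B$.
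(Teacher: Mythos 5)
There is a genuine and fatal gap: your reduction replaces the statement to be proved by a strictly stronger one that is false under the stated hypotheses. You reduce the homotopy lifting property of $\pi$ to the claim that $h\colon E\to E'$ itself is a fibration, and the rest of the argument is devoted to producing \emph{strict} lifts through $h$. But $h$ need not be a fibration. Take $B$ to be a point, $E=\{0\}$, $E'=\mathbb{R}$, and $h$ the inclusion: every map to a point is a surjective homotopic submersion and a fibration, and $h=h_b$ is a weak homotopy equivalence, so all hypotheses of the lemma hold (and its conclusion is trivially true); yet the homotopy $t\mapsto t$ of the constant map $pt\to 0\in\mathbb{R}$ admits no lift through $h$, so $h$ is not a fibration. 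The obstruction is exactly the one you flag as the ``main obstacle'': a weak homotopy equivalence admits lifts only up to homotopy, and no amount of cell-by-cell rectification via Whitehead's theorem and the homotopy extension property will convert a lift up to homotopy into the strict equality $h\circ H=\Phi'$ that your open--closed scheme requires at each stage. (Two further, smaller problems: Hurewicz fibrations such as $\pi'$ do not have unique path lifting, only uniqueness of lifts up to fibrewise homotopy, so the justification of your correction step is off; and the homotopy inverses supplied by Whitehead's theorem for the family $h_b$ cannot in general be chosen continuously in $b$, which undermines the claim that the construction stays continuous over $B$.)

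What would salvage the strategy is to aim only for $\pi\circ H=\Phi$ with $h\circ H$ fibrewise homotopic to $\Phi'$, but making that precise is essentially the content of Meigniez's theorem itself. Note that the paper does not prove this lemma: it is quoted from \cite{M}, where it is a substantive result (a fibration criterion) whose proof does not pass through showing that $h$ is a fibration. As written, your argument proves a false intermediate statement and therefore does not establish the lemma.
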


\begin{lemma}{\rm (\cite{M}, Corollary 32)}\label{lm1.4}
Let $\pi: E \to B$ be a differential map such that $\dim_{\Bbb{R}} E= \dim_{\Bbb{R}} B + 2$. Assume that the followings are satisfied:
\begin{enumerate}
\item[(i)] $\pi$  is a surjective;
\item[(ii)] $\pi$ is a fibration;
\item[(iii)]  $\pi $ is a submersion.
\end{enumerate}
Then $\pi$ is a locally $C^{\infty}-$trivial fibration.
\end{lemma}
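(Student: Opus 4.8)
The plan is to prove that being locally $C^\infty$-trivial is a local property on $B$ and to construct, for each $b_0\in B$, a smooth trivialization over a small ball around $b_0$. First I would fix $b_0\in B$, choose a contractible coordinate ball $U\subset B$ centred at $b_0$, and set $F:=\pi^{-1}(b_0)$; since $\dim_{\mathbb{R}}E=\dim_{\mathbb{R}}B+2$ and $\pi$ is a submersion, $F$ is a smooth surface and it suffices to build a $C^\infty$-diffeomorphism $\Phi\colon U\times F\to\pi^{-1}(U)$ with $\pi\circ\Phi=\pr_U$. By Lemma \ref{lm1.2} $\pi$ is a homotopic submersion, so together with hypothesis (ii) we are placed in the framework of \cite{M}. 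Because $\pi$ is a submersion, the vertical bundle $\ker d\pi\subset T(\pi^{-1}(U))$ is a smooth subbundle of corank $\dim B$; choosing a smooth complement $H$ (an Ehresmann connection) I obtain smooth horizontal lifts of the coordinate fields on $U$, and parallel transport along the radial segments $s\mapsto b_0+s(b-b_0)$ defines, wherever its flow is complete, a smooth diffeomorphism $P_b\colon F\to\pi^{-1}(b)$ depending smoothly on $b$. Setting $\Phi(b,e)=P_b(e)$ would then conclude the proof exactly as in Ehresmann's Fibration Theorem.

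The difference from the proper case is that $\pi$ need not be proper, so the horizontal flows may a priori escape to infinity in finite time and $P_b$ may be undefined; overcoming this is where hypothesis (ii) enters. Since $\pi$ is a fibration, the homotopy lifting property applies to the radial contraction $r\colon\pi^{-1}(U)\times[0,1]\to U$, $r(e,s)=b_0+(1-s)(\pi(e)-b_0)$, starting from the inclusion $\pi^{-1}(U)\hookrightarrow E$: I get a continuous lift $\tilde r$ covering $r$ with $\tilde r(\cdot,0)=\id$, whose time-one map $\rho:=\tilde r(\cdot,1)\colon\pi^{-1}(U)\to F$ together with $\pi$ yields a continuous fibrewise map $(\pi,\rho)\colon\pi^{-1}(U)\to U\times F$. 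On each fibre $\rho$ restricts to the fibre transport of the fibration, hence a homotopy equivalence, and running the homotopy in the opposite direction produces a continuous inverse up to fibrewise homotopy. I would then use this continuous near-trivialization to rule out escape to infinity of the horizontal curves over the relatively compact segments in $U$ (equivalently, to upgrade $H$, via a partition of unity, to a complete connection), so that $P_b$ is defined for all $b\in U$. This is the step where I expect the real work to lie.

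With a complete connection in hand, $\Phi(b,e)=P_b(e)$ is a smooth bijection onto $\pi^{-1}(U)$ covering the projection, and it is a local diffeomorphism because horizontal lifts are smooth and transverse to the fibres, hence a $C^\infty$-diffeomorphism; thus $\pi$ is locally $C^\infty$-trivial. The hypothesis $\dim_{\mathbb{R}}E=\dim_{\mathbb{R}}B+2$ is used precisely at the passage from the continuous trivialization furnished by the fibration to the smooth one: the fibres are surfaces, for which the topological and differentiable categories coincide and every homeomorphism is isotopic to a diffeomorphism, so the continuous fibrewise identification $\rho$ can be deformed to the smooth transport without obstruction. \textbf{The main obstacle} is therefore the completeness-and-smoothing step of the previous paragraph: converting the soft, merely continuous and fibre-homotopy-theoretic data supplied by the homotopy lifting property into a genuine complete smooth connection (equivalently, a fibrewise diffeomorphism), which is exactly the content that the low fibre dimension renders tractable and that separates this statement from the classical Ehresmann theorem.
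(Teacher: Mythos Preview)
The paper does not prove this lemma at all; it is simply quoted as Corollary~32 of Meigniez~\cite{M}. So there is no argument in the paper to compare your proposal against---only the cited reference.

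Your outline correctly isolates the heart of the matter: Ehresmann's construction fails without properness exactly because horizontal lifts of an arbitrary connection may be incomplete, and you correctly identify that the fibration hypothesis~(ii) together with the two--dimensionality of the fibres are what must compensate. But the proposal stops precisely where the content begins. The sentence ``use this continuous near-trivialization to rule out escape to infinity of the horizontal curves \ldots\ (equivalently, to upgrade $H$, via a partition of unity, to a complete connection)'' is an assertion, not an argument, and you yourself flag it as ``where I expect the real work to lie.'' Everything else in the proposal is the classical Ehresmann argument. A merely continuous fibre-homotopy trivialization places no direct constraint on the dynamics of a smooth horizontal flow, and for \emph{open} surfaces the slogan that the topological and smooth categories coincide does not by itself promote a continuous fibrewise homotopy equivalence to a smooth fibrewise diffeomorphism varying smoothly over $U$. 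Meigniez's proof of Corollary~32 passes through his general theorem that a surjective $C^\infty$ submersion which is a Serre fibration admits a complete connection (this is the substantial part of~\cite{M}), combined with low-dimensional smoothing; reproducing that requires considerably more than a partition-of-unity remark. As written, your proposal is a correct diagnosis of the difficulty together with a faithful statement of what remains to be done, but not a proof.
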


\begin{proof}[Proof of Lemma \ref{thrmbifur-huuty1}] Let $t_0\notin K_0(F)\cup B_{\infty}(F)\cup K_1(F)$. Without loss of generality, we may assume that $t_0=0.$

Since $0\notin K_1(F)$ there is a neighbourhood $D_1$ of $0$ such that
$$\mu_p(f-tg)=\mu_p(f), \forall t\in D_1, p\in A(F).$$
Hence, according to Theorem \ref{thrmLe}, there exists a small neighbourhood $D_2$ of $0$ such that  for every $p\in A(F)$, there is a ball $B(p)$ centered at $p$ and there is a continuous family of homeomorphisms
$$\Phi_p(s)\colon \{f-sg=0\}\cap B(p)\to \{f=0\}\cap B(p), s\in D_2$$
such that $\Phi_p(s)(p)=p.$

The family $\Phi_p(s), s\in D_2$, generates a homeomorphism as follows
\begin{align*}
\Phi_p\colon F^{-1}(D_2)\cap B(p)&\to (F^{-1}(0)\cap B(p)) \times D_2\\
\Phi_p(x) &= (\Phi_p (F(x)), F(x)).
\end{align*}

Thus, we have the following commutative diagram of continuous maps
\begin{displaymath}
\xymatrix{
F^{-1}(D_2)\cap B(p) \ar[dr]^{F}  \ar[r]^{\Phi_p}  &  (F^{-1}(0)\cap B(p)) \times D_2.  \ar[d]^{pr_2}  \\
           &  D_2 }
\end{displaymath}

Since $0\notin K_0(F)$, we can choose $D_2$ small enough such that the restriction
$$F_{|F^{-1}(D_2)\cap B(p)}: F^{-1}(D_2)\cap B(p)\to D_2$$
is a submersion, hence, is a homotopic submersion (according to Lemma \ref{lm1.2}). Moreover, since $\Phi_p(s)$ is a homeomorphism, the restriction
$$(\Phi_p)_{|F^{-1}(s)}: F^{-1}(s)\to pr_2^{-1}(s)= (F^{-1}(0)\cap B(p)) \times \{s\}$$ 
is also a homeomorphism. Therefore, it is a weak homotopy equivalence. Thus, by Lemma \ref{lm1.3}, the map
$$F_{|F^{-1}(D_2)\cap B(p)}: F^{-1}(D_2)\cap B(p)\to D_2$$
is a fibration. It is easy to check that fibers of the map are two dimensional. It follows from Lemma \ref{lm1.4} that it is a $C^{\infty}-$trivial fibration. So there is a differmorphism
$$\Psi^1_p: F^{-1}(D_2)\cap B(p)\to (F^{-1}(0)\cap B(p)) \times D_2$$
 such that the following diagram
\begin{displaymath}
\xymatrix{
F^{-1}(D_2)\cap B(p) \ar[dr]^{F}  \ar[r]^{\Psi^1_p}  &  (F^{-1}(0)\cap B(p)) \times D_2  \ar[d]^{pr_2}\\
           &  D_2 }
\end{displaymath}
commutes.

On the other hand, since $0\notin B_{\infty}(F)$, there exist a neighbourhood $D_3$ of $0$, a compact set $B\subset \Bbb{C}^2$ and a homeomorphism
$$\Psi_2: F^{-1}(D_3)\setminus B\to (F^{-1}(0)\setminus B) \times D_3$$
such that the diagram 
\begin{displaymath}
\xymatrix{
F^{-1}(D_3)\setminus B \ar[dr]^{F}  \ar[r]^{\Psi^2}  &   (F^{-1}(0)\setminus B) \times D_3 \ar[d]^{pr_2}  \\
           &  D_2 }
\end{displaymath}
commutes.

Without loss of generality, we may assume that $D_2 = D_3= D$. Since $0$ is a regular value of $F$, we can choose $D$ small enough such that every $t\in D$ is regular value of $F$.

Now,  we construct a convenient vector field $v(x)$ on $F^{-1}(D)$ trivializing the restriction $F_{|F^{-1}(D)}$. Let $x^{\alpha}\in F^{-1}(D)$ arbitrary, we consider the following cases.

a) Case 1: $x^{\alpha}\in B(p)\cap F^{-1}(D), p\in A(F)$. Let $U^{\alpha}:= B(p)\cap F^{-1}(D)$ and
 $v^{\alpha}(x):= \frac{\partial (\Psi^1_p)^{-1}}{\partial s}(\Psi^1_p(x)),$
where $s$ is coordinate on $D$.

It is easy to verify that 
$$<v^{\alpha}(x), \grad F(x)> = 1, x\in U^{\alpha}.$$

b) Case 2: $x^{\alpha}\in F^{-1}(D)\setminus B$. Let $U^{\alpha}=F^{-1}(D)\setminus B$ and $v^{\alpha}(x)= \frac{\partial (\Psi^2)^{-1}}{\partial s}(\Psi^2(x)), x\in U^{\alpha}.$
Similarly, we have
$$<v^{\alpha}(x), \grad F(x)> = 1, x\in U^{\alpha}.$$

c) Case 3: $x^{\alpha}\in  (F^{-1}(D)\cap \textrm{int} B)\setminus (\cup_{p\in A(F)} \overline{B(p)})$. Let $U^{\alpha}:= (F^{-1}(D)\cap \textrm{int} B)\setminus (\cup_{p\in A(F)} \overline{B(p)}).$
Since $t\in D$ is regular value of $F$ then $\grad F(x)\neq 0$ for all $x\in F^{-1}(D).$ Let
$$v^{\alpha}(x) = \frac{\grad F(x)}{\|\grad F(x)\|}, x\in U^{\alpha}.$$
 
Then
$$<v^{\alpha}(x), \grad F(x)> = 1, x\in U^{\alpha}.$$

d) Case 4: $x^{\alpha}\in \partial B\cup (\cup_{p\in A(F)}\partial B(p)\cap F^{-1}(D))$. Since $t\in D$ is a regular value of $F$, then $x^{\alpha}$ is a regular point of $F$. Hence there is a small neighbourhood $U^{\alpha}$ of $x^{\alpha}$ such that $\grad F(x)\neq 0$ for all $x\in U^{\alpha}.$ Similarly, let $$v^{\alpha}(x) = \frac{\grad F(x)}{\|\grad F(x)\|}, x\in U^{\alpha}.$$

Let $\lambda^{\alpha}$ be a smooth unit partition on $F^{-1}(D)$ such that $\supp \lambda^{\alpha}\subset U^{\alpha}$ for all $\alpha$. The vector field $v(x)$ on $F^{-1}(D)$ is defined by $v(x):= \sum \lambda^{\alpha}(x) v^{\alpha}(x).$

It is clear that $v(x)$ is smooth and satisfies
$$<v(x), \grad F(x)> = 1, x\in F^{-1}(D).$$

By integrating the vector field $v(x)$, we get the diffeomorphism trivializing the map
$$F_{|F^{-1}(D)}: F^{-1}(D)\to D.$$
Thus $0\notin B(F).$ 
\end{proof}

Now, we assume that $\deg f> \deg g$. Then $d_t$ and $V^t_{\infty}$ do not depend on $t$, set $d:=d_t$. The following is deduced from Corollary 4.4 in \cite{D}.

\begin{proposition} \label{prop4.2.14} For all $t\in \Bbb{C}$, we have
$$\chi(\overline{V_t})= \chi(V) + \Sigma_{p \in\, \textrm{Sing} (V_t)} \mu_p(G(x, y, z, t)),$$
where $V$ is a smooth projective curve of degree $d$ and $\overline{V_t}$ is the projective closure of the curve $V_t\subset \mathbb{C}^2$.
\end{proposition}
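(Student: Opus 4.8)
The plan is to identify $\overline{V_t}$ as a plane curve of degree $d$ in $\mathbb{C}P^2$ with only isolated singularities and then to invoke the classical formula comparing the Euler characteristic of such a curve with that of a smooth plane curve of the same degree, the discrepancy being a sum of local Milnor numbers; this is exactly the content of the cited Corollary~4.4 of \cite{D}, and the Proposition will follow by applying it to $C=\overline{V_t}$.

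First I would record the routine reductions. Since $\deg f>\deg g$ we have $d_t=\deg(f-tg)=d$ for every $t$, so $G(x,y,z,t)$ is precisely the degree-$d$ homogenization of $f-tg$ and $\overline{V_t}=\{G(\cdot,t)=0\}$ is a plane curve of degree $d$; after replacing $f-tg$ by its reduced part (which alters neither $\overline{V_t}$ nor $V_t$ as sets) we may assume it is reduced, so that $\overline{V_t}$ has at most isolated singular points. Moreover $\overline{V_t}\cap H_\infty=V^t_\infty$ is cut out on $H_\infty$ by the leading form of $f$ and is therefore independent of $t$; in the situation at hand these points are nonsingular on $\overline{V_t}$ (the leading form of $f$ being reduced), so that $\Sing\overline{V_t}=\Sing V_t\subset\mathbb{C}^2$ and $\mu_p(G(\cdot,t))=\mu_p(f-tg)$ at each such $p$.

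It then remains to prove that a reduced plane curve $C\subset\mathbb{C}P^2$ of degree $d$ with isolated singularities satisfies $\chi(C)=\chi(V)+\sum_{p\in\Sing C}\mu_p(C)$, where $V$ is a smooth plane curve of degree $d$. I would argue via the normalization $\nu\colon\widetilde C\to C$. The arithmetic genus $p_a(C)=\binom{d-1}{2}$ is insensitive to the singularities, and one has $p_a(C)=1-c+\sum_i g(\widetilde C_i)+\sum_p\delta_p$ with $c$ the number of irreducible components of $C$, the $g(\widetilde C_i)$ the genera of the components of $\widetilde C$, and $\delta_p$ the delta-invariant. Combining this with $\chi(\widetilde C)=\sum_i\bigl(2-2g(\widetilde C_i)\bigr)$, with $\chi(C)=\chi(\widetilde C)-\sum_p(r_p-1)$ (since $\nu$ is $r_p$-to-one over a singular point having $r_p$ local branches and bijective elsewhere), and with the classical formula $\mu_p=2\delta_p-r_p+1$, a short computation yields $\chi(C)=2-2\,p_a(C)+\sum_p\mu_p=2-2\binom{d-1}{2}+\sum_p\mu_p$. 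Since a smooth plane curve $V$ of degree $d$ has $\chi(V)=2-2\binom{d-1}{2}$, this is the desired identity; applying it to $C=\overline{V_t}$ completes the proof.

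The step I expect to require genuine care is the behaviour at infinity: one must verify that the points of $V^t_\infty$ are smooth points of $\overline{V_t}$ (equivalently, that the leading form of $f$ is squarefree), since otherwise there are further — $t$-independent — contributions coming from $H_\infty$ that the stated sum over $\Sing V_t$ does not see. The passage to a reduced pencil member also deserves a remark for the finitely many $t$ where $f-tg$ is non-reduced, but the remaining bookkeeping with $\delta_p$, $r_p$ and $\mu_p$ is entirely standard.
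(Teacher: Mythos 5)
The paper offers no argument here at all: the Proposition is simply ``deduced from Corollary 4.4 in \cite{D}'', i.e.\ from the standard comparison of the Euler characteristic of a degree-$d$ plane curve with isolated singularities against that of a smooth curve of the same degree. Your proposal supplies a self-contained proof of exactly that cited fact, via the normalization, the constancy of the arithmetic genus $p_a=\binom{d-1}{2}$, the relation $p_a=1-c+\sum_i g(\widetilde C_i)+\sum_p\delta_p$, the count $\chi(C)=\chi(\widetilde C)-\sum_p(r_p-1)$, and Milnor's formula $\mu_p=2\delta_p-r_p+1$. That computation is correct, and it is a legitimately different (more elementary and more transparent) route than the paper's bare citation.

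There is, however, one step that does not hold up: your reduction asserting that the points of $V^t_\infty$ are nonsingular on $\overline{V_t}$ ``the leading form of $f$ being reduced''. Nothing in the hypotheses guarantees that the degree-$d$ form $f_d$ is squarefree, and when it is not, $\overline{V_t}$ typically does have singular points on $H_\infty$ (e.g.\ $f=x^4+y^2$, $g=x$, where $[0:1:0]$ is singular on every $\overline{V_t}$). Indeed the paper's own use of the Proposition, in the proof of Lemma~\ref{lm4.2.13}, explicitly splits the sum into contributions from $p\in A(F)$ and from $p\in V_\infty$ with Milnor numbers $\mu_p(G(x,y,z,t))$ at the latter; so the index set $\Sing(V_t)$ in the statement must be read as $\Sing(\overline{V_t})$, points at infinity included (the Milnor numbers are those of $G$, which only makes sense projectively). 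The repair is immediate: drop the smoothness-at-infinity claim entirely and apply your general identity $\chi(C)=\chi(V)+\sum_{p\in\Sing C}\mu_p(C)$ to $C=\overline{V_t}$ with the sum over all of $\Sing\overline{V_t}$ — your normalization argument needs no hypothesis at infinity. (Your remark about non-reduced members of the pencil is well taken, but note that replacing $f-tg$ by its reduced part lowers the degree and destroys the Milnor numbers; the honest statement is that the formula is asserted only for those $t$ — all but finitely many, and in particular all regular values — for which $f-tg$ is reduced, which is the only case the paper uses.)
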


\begin{lemma}\label{lm4.2.13} Let $F=f/g: \Bbb{C}^2\setminus\{g= 0\}\to \Bbb{C}$ be a rational function and $t_0$ be a regular value of $F$. Assume that $\deg f> \deg g$ and
$\chi(F^{-1}(t)) = \chi(F^{-1}(t_0))$ for all $t$ near $t_0$ enough. Then $t_0\notin K_1(F)$ and there exists a neighborhood $D$ of $t_0$ such that
$$\mu_p(G(x, y, z, t))= \mu_p(G(x, y, z, t_0)), p\in V_{\infty}, t\in D.$$ 
\end{lemma}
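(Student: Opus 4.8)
The plan is to translate the hypothesis on Euler characteristics into the statement that the total Milnor number of the projective curve $\overline{V_t}$ is locally constant at $t_0$, and then to localize this to the individual singular points of $\overline{V_t}$. The crucial extra point is that, because $t_0$ is a regular value of $F$, the singular locus of $\overline{V_t}$ is frozen inside the fixed finite set $A(F)\cup V_\infty$ for $t$ near $t_0$.

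First I would rewrite the fibres. If $g(p)=0$ and $p\in V_t$, then $f(p)=tg(p)=0$, so $V_t\cap\{g=0\}=A(F)$ and $F^{-1}(t)=V_t\setminus A(F)$. Since $\deg f>\deg g$, both $A(F)$ and $V_\infty=\overline{V_t}\cap H_\infty$ are finite and independent of $t$, and $\overline{V_t}$ is the disjoint union of the locally closed pieces $F^{-1}(t)$, $A(F)$ and $V_\infty$; by additivity of the Euler characteristic for complex algebraic varieties, $\chi(\overline{V_t})=\chi(F^{-1}(t))+\#A(F)+\#V_\infty$. Hence the hypothesis "$\chi(F^{-1}(t))=\chi(F^{-1}(t_0))$ for $t$ near $t_0$" is equivalent to "$\chi(\overline{V_t})=\chi(\overline{V_{t_0}})$ for $t$ near $t_0$". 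By Proposition \ref{prop4.2.14}, $\chi(\overline{V_t})-\chi(V)=\sum_{p\in\Sing\overline{V_t}}\mu_p(G(x,y,z,t))$ with $\chi(V)$ independent of $t$; writing $\mu(t)$ for this sum, the hypothesis says precisely that $\mu(t)=\mu(t_0)$ for all $t$ close to $t_0$.

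Next I would use that $t_0$ is a regular value. As $K_0(F)$ is finite, there is a neighbourhood $D_0$ of $t_0$ disjoint from $K_0(F)$. For $t\in D_0$, a point $p\in\Sing V_t$ with $g(p)\neq 0$ would satisfy $\grad(f-tg)(p)=0$ and $f(p)=tg(p)$, hence $\grad F(p)=0$, making $p$ a critical point of $F$ with value $t$ --- impossible. So $\Sing V_t\subset V_t\cap\{g=0\}=A(F)$, and therefore $\Sing\overline{V_t}\subset A(F)\cup V_\infty$, a fixed finite set. Moreover, if $q\in A(F)\cup V_\infty$ is a smooth point of $\overline{V_{t_0}}$, then not all partial derivatives of $G(x,y,z,t_0)$ in the coordinates of an affine chart around $q$ vanish at $q$, and by continuity in $t$ the same is true of $G(x,y,z,t)$ when $t$ is close to $t_0$, so $q$ remains a smooth point of $\overline{V_t}$; shrinking $D_0$ we get $\Sing\overline{V_t}\subset\Sing\overline{V_{t_0}}$ for $t\in D_0$. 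Now let $q_1,\dots,q_k$ be the (finitely many, isolated) singular points of $\overline{V_{t_0}}$, and choose pairwise disjoint balls $B(q_i)$ so small that $B(q_i)\cap(A(F)\cup V_\infty)=\{q_i\}$. For $t$ in a suitable neighbourhood $D\subset D_0$, the only point of $\Sing\overline{V_t}$ in $B(q_i)$ is $q_i$, so the upper semicontinuity of the Milnor number in the holomorphic family $G(x,y,z,t)$ gives $\mu_{q_i}(G(x,y,z,t))\le\mu_{q_i}(G(x,y,z,t_0))$. Since also $\mu(t)=\sum_i\mu_{q_i}(G(x,y,z,t))$ for $t\in D$ (because $\Sing\overline{V_t}\subset\{q_1,\dots,q_k\}$ and $\mu_p$ vanishes at smooth points) and $\mu(t)=\mu(t_0)=\sum_i\mu_{q_i}(G(x,y,z,t_0))$, every one of the above inequalities must in fact be an equality: $\mu_{q_i}(G(x,y,z,t))=\mu_{q_i}(G(x,y,z,t_0))$ for all $i$ and all $t\in D$. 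For the $q_i$ lying in $V_\infty$ this is the asserted equality of Milnor numbers at infinity, while the remaining points of $V_\infty$ are smooth on both $\overline{V_t}$ and $\overline{V_{t_0}}$ so both sides vanish there; for the $q_i$ lying in $A(F)$, since $G(x,y,1,t)=f-tg$, it reads $\mu_{q_i}(f-tg)=\mu_{q_i}(f-t_0g)$, and at the points of $A(F)$ that are smooth on $V_{t_0}$ both sides are $0$ on $D$. Hence $\mu_p(f-tg)=\mu_p(f-t_0g)$ for every $p\in A(F)$ and every $t\in D$, so no $p\in A(F)$ can satisfy the defining condition of $K_1(F)$, i.e. $t_0\notin K_1(F)$.

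The heart of the matter --- and the step I expect to be the main obstacle to make fully rigorous --- is the passage from the single global identity "$\chi$ is locally constant" to the local identities "$\mu_p$ is locally constant at each singular point". This works only because the regularity of $t_0$ freezes $\Sing\overline{V_t}$ inside the fixed finite set $A(F)\cup V_\infty$, which allows the upper semicontinuity of the total Milnor number of a holomorphic family to be applied at each singular point separately and then sharpened to an equality using the constancy of the global total. One should also record that $\overline{V_{t_0}}$ has only isolated singularities near a regular value of $F$, so that the Milnor numbers $\mu_p(G(x,y,z,t_0))$ and Proposition \ref{prop4.2.14} are meaningful.
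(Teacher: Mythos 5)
Your argument is correct and is essentially the paper's own proof: both convert the constancy of $\chi(F^{-1}(t))$ into constancy of $\chi(\overline{V_t})$ via the decomposition $\overline{V_t}=F^{-1}(t)\sqcup A(F)\sqcup V_\infty$, invoke Proposition \ref{prop4.2.14}, and then use upper semicontinuity of the Milnor number at the singular points to upgrade the equality of the global totals to pointwise equalities at each point of $A(F)\cup V_\infty$. Your write-up merely makes explicit a step the paper leaves implicit, namely that regularity of $t_0$ freezes $\Sing\overline{V_t}$ inside the fixed finite set $A(F)\cup V_\infty$, which is what legitimizes applying semicontinuity point by point.
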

\begin{proof} Let $D$ be a neighborhood of $t_0$ such that  $\chi(V_t) = \chi(V_{t_0})$ for all $t\in D.$ Since $t_0\notin K_0(F)$, we can choose $D$ small enough such that $F^{-1}(t)$ is smooth for all $t\in D$.

By using the Mayer-Vietoris exact sequence, we obtain
$$\chi(F^{-1}(t))= \chi (\bar{V_t})- \# V_{\infty} - \#A(F).$$
  Therefore, according to Proposition \ref{prop4.2.14}, we have
\begin{align*}
&\chi (F^{-1}(t)) - \chi (F^{-1}(t_0))= \Sigma \mu _p( G(x, y, z, t)) -  \Sigma \mu _q (G(x, y, z, t_0))\\
& = \Sigma_{p \in A(F)} (\mu_p (f-tg) - \Sigma \mu_p (f-t_0g) ) + \Sigma_{p \in V_{\infty}} (\mu_p (G(x, y, z, t)) - \\
&-\mu_p (G(x, y, z, t_0))).
\end{align*}
Since the Milnor number is a semi-continuous function in $t$, then $\chi (F^{-1}(t)) - \chi (F^{-1}(t_0))\leq 0$, the equality occurs if and only if 
 $$\mu_p (G(x, y, z, t)) = \mu_p (G(x, y, z, t_0)),$$
 for all $p\in V_{\infty}, p \in A(F).$
\end{proof} 

\begin{proof}[Proof of Theorem \ref{thm12}] According to Lemma \ref{thrmbifur-huuty1}, it is enough to prove that 
$$K_0(F)\cup B_{\infty}(F)\cup K_1(F)\subset B(F).$$
Let $t_0\notin B(F)$ arbitrary. Then $F$ defines a locally $C^{\infty}-$trivial fibration at $t_0$. Let $D$ be the neighborhood of $t_0$ such that the restriction
$$F_{|F^{-1}(D)} \colon F^{-1}(D)\to D$$
 is a $C^{\infty}-$trivial fibration. That implies $t_0\notin B_{\infty}(F).$

According to the Sard's Theorem, we can take a regular value $t_1$ of $F_{|F^{-1}(D)}$. Therefore the fiber $F^{-1}(t_1)$ is smooth. Since $F_{|F^{-1}(D)}$ is trivial,  it is also smooth. Thus $t_0\notin K_0(F).$

On the other hand, for all $t\in D$ the fiber $F^{-1}(t_0)$ is homeomorphic to $F^{-1}(t)$. Therefore their Euler characteristic are equal, by Lemma   \ref{lm4.2.13}, we get $t_0\notin K_1(F)$. The proof is complete.
\end{proof}

\section{Critical values at infinity}
Let $F=\frac{f}{g}: \Bbb{C}^2\setminus\{g= 0\}\to \Bbb{C}$ be a rational function, where $f, g\in \mathbb{C}[x, y]$ have no common factor. This section is to characterize the critical values at infinity of $F$.

Let $t_0\in \mathbb{C}\setminus (K_0(F)\cup K_1(F))$ such that
\begin{align}
d:= \deg (f-t_0g)= \max \{ \deg f, \deg g\}.\label{dkbac}
\end{align}
Without loss of generality, we may assume that
$$d=\textrm{deg}_x (f-t_0g).$$ 

\begin{remark}{\rm 
The assumption \ref{dkbac} holds in the following situations:

1) $\deg f> \deg g;$

2) $\deg g> \deg f$ and $t_0\neq 0;$

3) $\deg f= \deg g= d$ and $t_0\neq \frac{f_d}{g_d}$, where $f_d, g_d$ are respectively the highest-degree homogeneous components of $f, g$.
}
\end{remark}
\subsection{Geometrical and topological characterizations}

We denote by $L$ the following linear function
$$\Bbb{C}^2\to \Bbb{C}, (x, y) \mapsto y.$$
For each $t\in \Bbb{C}$ let
$$L_t:=L_{|V_t}: V_t\to \Bbb{C}$$
and
$$l_t:=L_{|F^{-1}(t)}: F^{-1}(t)\to \Bbb{C},$$
where $V_t= \{(x, y)\in \mathbb{C}^2 : f(x, y)- tg(x, y)=0\}$.
It is easy to prove that

\begin{lemma}\label{lm3.1}\label{lm4.2.2} For all $\delta > 0$ small enough and $t \in D_{\delta}(t_0)$, the map
$$L_t: V_t\to \Bbb{C}$$
is proper and $ \# L_t^{-1}(c)=d$, where $c$ is a generic constant and $D_{\delta}(t_0)=\{t\in \Bbb{C}: |t-t_0|< \delta\}.$
\end{lemma}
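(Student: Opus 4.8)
The plan is to work with the projective closure and use the degree condition \eqref{dkbac} to control the behavior at infinity uniformly in $t$. First I would recall that, by assumption \eqref{dkbac} and the normalization $d = \deg_x(f - t_0 g)$, the polynomial $f - t_0 g$ has $x$-degree exactly $d$, and the coefficient of $x^d$ is a nonzero constant (it cannot involve $y$, since otherwise the total degree would exceed $d$). By semicontinuity of degree and continuity of coefficients in $t$, there is a $\delta > 0$ such that for all $t \in D_\delta(t_0)$ the polynomial $f - tg$ still has $x$-degree $d$ with nonvanishing leading coefficient in the $x$-variable. Hence for each fixed value $c$ of $y$, the equation $(f - tg)(x, c) = 0$ is a degree-$d$ polynomial in $x$, so the fiber $L_t^{-1}(c)$ consists of at most $d$ points, and exactly $d$ points counted with multiplicity.

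Next I would establish properness. The map $L_t$ is the restriction to $V_t$ of the projection $(x,y) \mapsto y$; properness amounts to showing that a sequence of points $(x_k, y_k) \in V_t$ with $y_k$ bounded cannot escape to infinity, i.e. $x_k$ stays bounded. But if $y_k \to c$ and $|x_k| \to \infty$ with $(f - tg)(x_k, y_k) = 0$, then dividing by $x_k^d$ and passing to the limit, the leading coefficient of $f - tg$ in $x$ (a nonzero constant) would have to vanish — a contradiction. So $L_t$ is proper. Equivalently, one can phrase this via $\overline{V_t} \subset \mathbb{C}P^2$: the point $[1:0:0]$ (the "$x$-direction at infinity") does not lie on $\overline{V_t}$ for $t \in D_\delta(t_0)$ because $G(1,0,0,t)$ equals precisely that leading coefficient, which is nonzero; since the line $\{y = c\}$ in $\mathbb{C}^2$ closes up in $\mathbb{C}P^2$ to a projective line through $[1:0:0]$, and that point is off the curve, Bézout gives exactly $d$ intersection points in the affine part for generic $c$.

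Finally, for the count $\#L_t^{-1}(c) = d$ for generic $c$: the multiple points of $L_t$ over a value $c$ occur either at singular points of $V_t$ or at points where the tangent line is vertical (i.e. $\partial_x(f - tg) = 0$), a condition cutting out a finite set in $V_t$ (since $f - tg$ has no repeated factor for $t$ near $t_0$ — here I use $t_0 \notin K_0(F)$ and shrinking $\delta$, so $V_t$ has only finitely many singular points and $\partial_x(f-tg)$ does not vanish identically on any component), whose image under $L_t$ is a finite subset of $\mathbb{C}$. For $c$ outside this finite set, $L_t^{-1}(c)$ is $d$ reduced points. The main point to be careful about is uniformity: one needs the \emph{same} $\delta$ to work for properness, for the degree count, and for $V_t$ being reduced with isolated singularities — but each of these is an open condition on $t$ satisfied at $t_0$, so intersecting finitely many neighborhoods suffices. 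The statement about $l_t$ and $F^{-1}(t)$ is not asserted in this lemma, so nothing further is needed; $F^{-1}(t) = V_t \setminus A(F)$ is an open dense subset and will be handled in the subsequent analysis.
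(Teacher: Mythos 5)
Your proof is correct: the paper itself offers no argument for this lemma (it is introduced with ``It is easy to prove that''), and your write-up supplies exactly the standard reasoning one would expect — the degree condition forces the coefficient of $x^d$ in $f-tg$ to be a nonzero constant for $t$ near $t_0$, which gives both properness of the projection to $y$ and the count of $d$ roots, with genericity of $c$ handling the discriminant locus. The only point worth stating slightly more explicitly is that squarefreeness of $f-tg$ fails only on a finite set of $t$ (the vanishing locus of $\mathrm{disc}_x(f-tg)$ as a polynomial in $t$, which is not identically zero since $t_0\notin K_0(F)$), so a single $\delta$ indeed works uniformly.
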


The following follows from Lemma \ref{lm3.1} and the argument in the proof of Lemma 3.2 in \cite{HT}.

\begin{lemma}\label{lm3.2} Under the hypothesis in Lemma \ref{lm4.2.2}, for all $\delta>0$ small enough, the restriction
$$L_{\delta}:=L_{| \cup_{t\in \overline{D_{\delta}(t_0)}} V_t}: \cup_{t\in \overline{D_{\delta}(t_0)}} V_t\to \Bbb{C}$$
is proper.
\end{lemma}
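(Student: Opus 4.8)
The plan is to prove properness of $L_{\delta}$ directly, by showing that $L_{\delta}^{-1}(K)$ is compact for every compact $K\subset\mathbb{C}$; this is the uniform-in-$t$ version of the computation behind Lemma~\ref{lm4.2.2} (equivalently, of the proof of Lemma~3.2 in \cite{HT}). Throughout, $\delta$ is chosen small enough that the finitely many smallness conditions below are met, and we may assume $d\ge 1$, since otherwise $f-tg$ is a nonzero constant for $t$ near $t_0$, each $V_t$ is empty, and the claim is trivial. The first point is to isolate the algebraic content of \eqref{dkbac}. Writing $f-tg=\sum_{i=0}^{d}c_i(t,y)\,x^{i}$ with $c_i\in\mathbb{C}[t,y]$ affine in $t$, and using $\deg f\le d$ and $\deg g\le d$, one sees that the leading coefficient $c_d$ involves no $y$: it is the affine function $c_d(t)=\alpha-t\beta$, where $\alpha,\beta\in\mathbb{C}$ are the coefficients of $x^{d}$ in $f$ and in $g$. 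The normalization $d=\deg_x(f-t_0g)$ says precisely that $c_d(t_0)\ne 0$, so $c_d(t)\ne 0$ for all $t\in\overline{D_{\delta}(t_0)}$ once $\delta$ is small.

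The key step is an elementary a priori bound on the $x$-coordinate. If $(x,y)\in V_t$ for some $t\in\overline{D_{\delta}(t_0)}$ and $|x|\ge 1$, then $\sum_{i=0}^{d}c_i(t,y)x^{i}=0$ yields $|c_d(t)|\,|x|^{d}\le\bigl(\sum_{i=0}^{d-1}|c_i(t,y)|\bigr)|x|^{d-1}$, hence
$$|x|\ \le\ \frac{1}{|c_d(t)|}\sum_{i=0}^{d-1}|c_i(t,y)|.$$
As $(t,y)$ ranges over the compact set $\overline{D_{\delta}(t_0)}\times K$, the right-hand side stays bounded, because $|c_d(t)|$ attains a positive minimum on the closed disk while each $|c_i(t,y)|$ is bounded on a compact set. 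So there is $R=R(K)>0$ such that $|x|\le R$ for every $(x,y)\in\bigcup_{t\in\overline{D_{\delta}(t_0)}}V_t$ with $y\in K$.

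It then remains to conclude by sequential compactness. A point of $L_{\delta}^{-1}(K)$ is a pair $(x,y)\in\bigcup_{t\in\overline{D_{\delta}(t_0)}}V_t$ with $y\in K$; given a sequence $(x_k,y_k)$ of such points, choose $t_k\in\overline{D_{\delta}(t_0)}$ with $f(x_k,y_k)-t_kg(x_k,y_k)=0$. After passing to subsequences we have $y_k\to y_{\ast}\in K$, $t_k\to t_{\ast}\in\overline{D_{\delta}(t_0)}$, and---since all the $x_k$ lie in the fixed disk $\{|x|\le R(K)\}$---$x_k\to x_{\ast}$. By continuity $f(x_{\ast},y_{\ast})-t_{\ast}g(x_{\ast},y_{\ast})=0$, so $(x_{\ast},y_{\ast})\in V_{t_{\ast}}\subset\bigcup_{t\in\overline{D_{\delta}(t_0)}}V_t$ and $y_{\ast}\in K$; hence $(x_{\ast},y_{\ast})\in L_{\delta}^{-1}(K)$. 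Thus $L_{\delta}^{-1}(K)$ is compact, and $L_{\delta}$ is proper.

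The only point that is not purely formal---and the one that genuinely uses hypothesis \eqref{dkbac} together with the normalization $d=\deg_x(f-t_0g)$---is the assertion that the leading coefficient $c_d$ in $x$ is $y$-free and remains nonzero near $t_0$. Without this, the coefficient of $x^{d}$ could vanish along a curve in the $y$-line, and then parts of the fibres could run off to infinity in the $x$-direction over $\overline{D_{\delta}(t_0)}$; once this is settled, everything else above is routine, and I expect it to be the only delicate point.
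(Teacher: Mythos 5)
Your proof is correct. The paper offers no written proof of this lemma---it simply defers to Lemma \ref{lm3.1} and ``the argument in the proof of Lemma 3.2 in \cite{HT}''---and your argument is exactly the natural uniform-in-$t$ version of that properness computation: the observation that under \eqref{dkbac} the coefficient of $x^{d}$ in $f-tg$ is $y$-free and nonvanishing on $\overline{D_{\delta}(t_0)}$, followed by the elementary root bound and a sequential-compactness check, is the same mechanism the cited argument relies on, so I regard your write-up as a correct, self-contained rendering of the intended proof.
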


\begin{remark}\label{rm4.1}{\rm
1) The critical points of $l_t:F^{-1}(t)\to \Bbb{C}$ are exactly the critical points of $L_t: V_t\to \Bbb{C}$ not belonging to the set $V_t$.

2) The critical points of $L_t: V_t\to \Bbb{C}$ are algebraic functions in $t$, we can divide them into two types:

\hskip 1cm (i) The points which tend to critical points of $L_{t_0}$ as $t\to t_0$. The number of points in this type, counting with multiplicity, is equal to the number of critical points, counting with multiplicity, of $L_{t_0}$.

\hskip 1cm (ii) The points that tend to infinity as $t\to t_0$ (the points in this type are also critical points of $l_t
$).

}\end{remark}

\begin{lemma}\label{lm4.2.6}\label{lm3.4}  For each $a>0$ and $\delta>0$ let 
$$U(a, \delta) :=\{|L|\leqslant a\}\cap \overline{F^{-1}(D_{\delta}(t_0))}.$$
For $a$ is large enough and $\delta$ is small enough we have
\begin{align*}
\chi(V_t)- \chi(V_{t_0})&= \chi(F^{-1}(t_0))-\chi(F^{-1}(t))\\
&= \chi(F^{-1}(t_0)\setminus U(a, \delta))- \chi(F^{-1}(t)\setminus U(a, \delta)),
\end{align*}
where $V_t= \{(x, y)\in \mathbb{C}^2 : f(x, y) - tg(x, y)=0\}.$
\end{lemma}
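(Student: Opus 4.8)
The plan is to establish the two claimed equalities separately. The first one, $\chi(V_t)-\chi(V_{t_0})=\chi(F^{-1}(t_0))-\chi(F^{-1}(t))$, follows exactly as in the proof of Lemma \ref{lm4.2.13}: apply the Mayer--Vietoris decomposition $\chi(F^{-1}(t))=\chi(\overline{V_t})-\#V^t_\infty-\#A(F)$, note that by hypothesis \eqref{dkbac} the degree $d_t$ and the set $V^t_\infty$ are constant for $t\in D_\delta(t_0)$ so that $\#V^t_\infty$ is independent of $t$, and use Proposition \ref{prop4.2.14} together with the facts that $t_0\notin K_1(F)$ (so $\mu_p(f-tg)=\mu_p(f-t_0g)$ for $p\in A(F)$) and $t_0\notin K_0(F)$ (so no new singularities of $V_t$ appear in the affine part near $t_0$). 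What survives is precisely the difference of the contributions of the points at infinity, which is $\chi(\overline{V_{t_0}})-\chi(\overline{V_t})=\chi(F^{-1}(t_0))-\chi(F^{-1}(t))$ after the bookkeeping, giving also $\chi(V_t)-\chi(V_{t_0})$ on the left by subtracting off the constant affine singularity contribution.

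For the second equality the key is to show that, for $a$ large and $\delta$ small, the compact region $U(a,\delta)$ carries no change in Euler characteristic as $t$ varies, i.e. $\chi(F^{-1}(t_0)\cap U(a,\delta))=\chi(F^{-1}(t)\cap U(a,\delta))$. First I would invoke Lemma \ref{lm3.2}: the map $L_\delta$ on $\cup_{t\in\overline{D_\delta(t_0)}}V_t$ is proper, so $\{|L|\le a\}\cap\overline{F^{-1}(D_\delta(t_0))}$ is compact and the family $\{F^{-1}(t)\cap\{|L|\le a\}\}_t$ is a proper family over $\overline{D_\delta(t_0)}$. Using Remark \ref{rm4.1}, the critical points of $L_t$ of type (i) stay in a fixed compact set and, since $t_0\notin K_0(F)$, for $\delta$ small they contribute a locally constant (indeed constant) count; the critical points of type (ii) escape to infinity and hence, for $a$ large and $\delta$ small, lie outside $\{|L|\le a\}$. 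Therefore the restriction $L_t\colon F^{-1}(t)\cap\{|L|\le a\}\to\{|c|\le a\}$ is, over $\overline{D_\delta(t_0)}$, a family of proper maps with a fixed number $d$ of sheets and with critical behaviour independent of $t$; a standard Thom--Mather / Ehresmann type isotopy argument (stratifying by the values of $L$ through critical values) then gives a homeomorphism $F^{-1}(t_0)\cap U(a,\delta)\cong F^{-1}(t)\cap U(a,\delta)$, hence equality of Euler characteristics. Finally, additivity of the Euler characteristic for the constructible decomposition $F^{-1}(t)=(F^{-1}(t)\cap U(a,\delta))\sqcup(F^{-1}(t)\setminus U(a,\delta))$ combined with the just-proven constancy of the first piece yields
\[
\chi(F^{-1}(t_0))-\chi(F^{-1}(t))=\chi(F^{-1}(t_0)\setminus U(a,\delta))-\chi(F^{-1}(t)\setminus U(a,\delta)),
\]
which is the second asserted equality.

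The main obstacle I anticipate is making precise the claim that $U(a,\delta)$ is "topologically constant" in $t$: one must check uniformly in $t$ that no critical value of $l_t$ crosses the circle $|c|=a$ and that the type-(i) critical points do not collide or merge as $t\to t_0$, which is where $t_0\notin K_0(F)$ and the properness in Lemma \ref{lm3.2} do the real work. Once the compact family is seen to be a locally trivial fibration over $\overline{D_\delta(t_0)}$ (after possibly shrinking $\delta$ and enlarging $a$), the Euler-characteristic statements are immediate from additivity and homotopy invariance.
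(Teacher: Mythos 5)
Your overall strategy --- split $F^{-1}(t)$ into the compact piece $F^{-1}(t)\cap U(a,\delta)$ and its complement, and show that the compact piece does not contribute to the variation of $\chi$ --- is the same as the paper's, and your treatment of the first equality (additivity over the constant contributions of $A(F)$ and of the points at infinity) is fine. The gap is in your justification of $\chi(F^{-1}(t_0)\cap U(a,\delta))=\chi(F^{-1}(t)\cap U(a,\delta))$. You assert that the type-(i) critical points of $L_t$ ``do not collide or merge as $t\to t_0$'' and that this is guaranteed by $t_0\notin K_0(F)$, so that the family of maps $L_t$ over $\overline{D_\delta(t_0)}$ has ``critical behaviour independent of $t$'' and can be trivialized by a stratified isotopy along the critical values of $L$. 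This is false: $t_0\notin K_0(F)$ only says that the curves $V_t$ are smooth; it says nothing about nondegeneracy of the critical points of the linear projection $L_{t_0}=L|_{V_{t_0}}$. Several simple critical points of $L_t$ can perfectly well coalesce into one degenerate critical point of $L_{t_0}$ (equivalently, critical values of $l_t$ can collide), in which case the configuration of critical values is not locally constant and the stratified isotopy of the pair $(F,L)$ you describe does not exist. What is true --- and all that Remark \ref{rm4.1}(i) gives --- is that the number of type-(i) critical points \emph{counted with multiplicity} near each critical point $Q_i$ of $L_{t_0}$ is conserved.

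This is exactly why the paper does not attempt to produce a homeomorphism of the compact pieces. Instead it decomposes $V_t\cap U(a,\delta)$ into the regions $M_t^i$, where $L$ is a $d$-sheeted unbranched covering so that $\chi(M_t^i)=d\,\chi(C^i)$, and $N_t^i$, a $d$-sheeted covering of the disc $D_{\beta_i}$ branched over the critical values so that $\chi(N_t^i)=d-\rho_i(t)$ by a Riemann--Hurwitz count; it then uses only the conservation $\rho_i(t)=\rho_i(t_0)$ of the total branching number, a computation completely insensitive to whether branch points split or merge. If you insist on a homeomorphism, one can be obtained by applying an Ehresmann-type argument directly to $F$ on $F^{-1}(D_\delta(t_0))\cap\{|L|\le a\}$ (properness is Lemma \ref{lm3.2}; transversality of $V_t$ to the boundary $\{|L|=a\}$ holds for $a$ large and generic and $\delta$ small, since it can only fail at critical points of $L_t$ lying over the circle $|c|=a$, and there are none there), together with the $K_1$-condition to control the punctures at $A(F)$ --- but that argument makes no reference to a stratification by critical values of $L$, which is precisely the part of your proposal that breaks down.
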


\begin{proof}
According to Remark \ref{rm4.1}, for sufficiently small enough $\delta$ and sufficiently large enough $a$, all critical points $L_t$ in $U(a, \delta)$ are in the first type. Let $Q_i, i=1,\dots, s$ be the critical points of $L_{t_0}$ in $U(a, \delta)$. Let $D_{\beta_i}$ as the disc centered at $L(Q_i)$, with the radius sufficiently small enough $\beta_i$, then for sufficiently small enough $\delta $ there is no critical point of $L_t$ in $U(a, \delta)\setminus \cup_{i=1}^s L^{-1}(D_{\beta_i})$. 

For each  $i=1,\dots, s$ and $t\in \mathbb{C}$ we denote
$$M_t^i:= (V_t\cap U(a, \delta))\setminus L_{\delta}^{-1}(D_{\beta_i}),\, N_t^i:= (V_t\cap U(a, \delta))\cap L_{\delta}^{-1}(D_{\beta_i})$$
and
$$C^i:= \{z\in \Bbb{C}: |z|\leqslant a\}\setminus D_{\beta_i}.$$

According to Lemma \ref{lm3.1}, for all $t\in D_{\delta}(t_0)$ the restriction $L_t= L_{|V_t}$ is proper, then $L(V_t)$ is close and constructible. Hence $L(V_t)= \mathbb{C}$ and the restriction map $L_{|M_t^i}\colon M_t^i\to C^i$ is surjective. Moreover, it is easy to check that $L_{|M_t^i}$ does not have critical point. Thus, the map $L_{|M_t^i}$ is an $d$-sheeted unbranched covering. Then
\begin{eqnarray}
\chi(M_t^i)= d\cdot \chi(C^i), \forall t\in D_{\delta}(t_0).\label{dangthucEuler1}
\end{eqnarray}
%\begin{align*}
%\chi(\{f-tg=0\}\cap U(a, \delta)\setminus L^{-1}(D_{\beta_i}))&=\chi(\{f-t_0g=0\}\cap U(a, \delta)\setminus L^{-1}(D_{\beta_i}))=
%d \chi(\{z: |z|\leqslant a\}\setminus D_{\beta_i} ).
%\end{align*}
On the other hand, the restriction map $L_{|N_t^i}: N_t^i\to D_{\beta_i}$ 
%$$L: (\{f-tg=0\}\cap U(a, \delta)\cap L^{-1}(D_{\beta_i}))\to D_{\beta_i}$$
is a $d$-sheeted covering branching over the critical points. By the same argument as in the proof of Theorem 3.1 in \cite{HT}, we have
\begin{eqnarray}
\chi(N_t^i)=d-\rho_i(t),  \forall t\in D_{\delta}(t_0),\label{dangthucEuler2}
\end{eqnarray}
%$$\chi((\{f-tg=0\}\cap U(a, \delta)\cap L^{-1}(D_{\beta_i})))=d-\rho_i(t), \forall t\in D_{\delta}(t_0),$$
where $\rho_i(t)$ is the number of critical points in $D_{\beta_i}$, counting with multiplicity, of  $L_t.$
By using the Mayer-Vietoris exact sequence, we get
$$\chi(V_t)-\chi(V_{t_0})= \left(\chi(V_t\setminus U(a, \delta))- \chi(V_{t_0}\setminus U(a, \delta))\right) +  \sum_{i=1}^s (\rho_i(t_0)-\rho_i(t)).$$
Moreover, by Remark \ref{rm4.1}, the second term is equal to $0$ and $V_t\setminus U(a, \delta)= F^{-1}(t)\setminus U(a, \delta)$. Then
$$\chi(V_t)-\chi(V_{t_0})= \chi(F^{-1}(t)\setminus U(a, \delta))- \chi(F^{-1}(t_0)\setminus U(a, \delta)).$$

Similarly, since $t_0\notin K_1(F)$, by using the Mayer-Vietoris exact sequence again, we can prove that
$$\chi(V_t)-\chi(V_{t_0})=\chi(F^{-1}(t)) - \chi(F^{-1}(t_0)).$$
From the last two equalities, we get the conclusion of the lemma.
\end{proof}

\begin{theorem}\label{thrm4.1}\label{thrm4.2.7}
Let $F= f/g: \Bbb{C}^2\setminus\{g= 0\}\to \Bbb{C}$ be a rational function, where $f, g\in \mathbb{C}[x, y]$ have no common factor, and let $t_0\notin K_0(F)\cup K_1(F)$ such that
$$\deg(f-t_0g)= \deg_x(f-t_0g)= \max\{\deg f, \deg g\}.$$
Then the followings are equivalent:

(i) $t_0\notin B_{\infty}(F);$

(ii) There is no critical point of $l_t=L_{|F^{-1}(t)}$ which tends to infinity as $t\to t_0$.
\end{theorem}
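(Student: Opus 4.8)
The plan is to prove the equivalence by analyzing the behaviour of the restricted linear projection $l_t = L_{|F^{-1}(t)}$ near $t_0$, using the properness statements of Lemma~\ref{lm3.2} and the Euler characteristic bookkeeping of Lemma~\ref{lm4.2.6}. The direction $\neg(ii)\Rightarrow\neg(i)$ should be the easier one: if some critical point $Q(t)$ of $l_t$ escapes to infinity as $t\to t_0$, then by Remark~\ref{rm4.1}(2)(ii) the number $\rho_i(t)$ of critical points of $L_t$ collected near the relevant branch point drops as $t$ moves away from $t_0$, and via \eqref{dangthucEuler1}--\eqref{dangthucEuler2} (or directly via Lemma~\ref{lm4.2.6}) this forces $\chi(F^{-1}(t))\neq\chi(F^{-1}(t_0))$ for $t$ near $t_0$; since a $C^\infty$-trivial fibration at infinity would give homeomorphic (hence Euler-equal) fibers after removing a compact set, and since on the compact part the Euler contribution is locally constant because $t_0\notin K_0(F)\cup K_1(F)$, we conclude $t_0\in B_\infty(F)$. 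Concretely, I would invoke the $(a,\delta)$ decomposition: $\chi(F^{-1}(t_0)\setminus U(a,\delta))-\chi(F^{-1}(t)\setminus U(a,\delta))=\chi(V_t)-\chi(V_{t_0})$, and observe this is nonzero exactly when a critical point escapes.

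For the main direction $(ii)\Rightarrow(i)$, the strategy is to build an explicit trivialization at infinity. Assuming no critical point of $l_t$ escapes, choose $a$ large and $\delta$ small so that all critical points of $L_t$ for $t\in\overline{D_\delta(t_0)}$ lie in $U(a,\delta)$ and none appears in the region $\{|L|>a\}$. Then on $F^{-1}(D_\delta(t_0))\setminus U(a,\delta)$ the pair of maps $(F,L)$ has no critical points for the projection to $D_\delta(t_0)$, and using properness of $L_\delta$ from Lemma~\ref{lm3.2} together with Ehresmann-type arguments one trivializes the family over $D_\delta(t_0)$ away from the compact set $U(a,\delta)$. More precisely, I would construct a smooth vector field on $\{|L|>a\}\cap F^{-1}(D_\delta(t_0))$ that projects to $\partial/\partial s$ under $F$ and is tangent to the level sets of $L$ (or bounded with respect to $L$), then check integrability using properness of $L$ restricted to this region; integrating it yields the $C^\infty$-triviality of $F: F^{-1}(D_\delta(t_0))\setminus B\to D_\delta(t_0)$ for a suitable compact $B\supseteq U(a,\delta)$, which is precisely $t_0\notin B_\infty(F)$.

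The main obstacle I expect is the properness/integrability check in the $(ii)\Rightarrow(i)$ direction: one must rule out that integral curves of the trivializing vector field in the noncompact region $\{|L|>a\}$ run off to infinity within the $L$-bounded (in the fiber direction) time interval, or equivalently, that the fibers of $L_t$ restricted to $\{|L|>a\}$ stay under control uniformly in $t\in D_\delta(t_0)$. This is exactly where the absence of escaping critical points and the properness of $L_\delta$ (Lemma~\ref{lm3.2}) must be combined — the no-escape hypothesis guarantees the branched covering $L_{|N_t^i}$ has a fixed branching behaviour so that the covering $L$ over $\{|z|>a\}$ is unbranched of constant degree $d$ for all $t\in D_\delta(t_0)$, and then the trivialization extends continuously in $t$. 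A secondary subtlety is handling the points of $A(F)$ and of $\mathrm{Sing}(V_t)$ inside $U(a,\delta)$, but these are absorbed into the compact set $B$ and do not interfere with the fibration \emph{at infinity}; the hypotheses $t_0\notin K_0(F)\cup K_1(F)$ ensure nothing pathological happens there, and in fact are only needed to make the Euler characteristic comparison in Lemma~\ref{lm4.2.6} exact.
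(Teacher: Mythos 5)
Your proposal is correct and follows essentially the same route as the paper: the direction $(ii)\Rightarrow(i)$ is done by trapping all critical points of $l_t$ in a compact set $U(a,\delta)$ (compact by Lemma~\ref{lm3.2}) and trivializing $F$ outside it via the vector-field/covering argument the paper borrows from~\cite{HT}, while $(i)\Rightarrow(ii)$ is obtained by contraposition through the branched/unbranched covering count of critical points and the Euler characteristic bookkeeping of Lemma~\ref{lm4.2.6}, with $t_0\notin K_0(F)\cup K_1(F)$ ensuring the compact part contributes nothing. No substantive differences from the paper's proof.
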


\begin{proof}ii) $\Longrightarrow $ i): Assume that there is no critical point of $l_t$ going to infinity as $t\to t_0$. It follows that if a number $a$ is large enough, then the set
$$U(a)=\{|L|\leqslant a\}\cap (\overline{F^{-1}(D_{\delta}(t_0))})$$
contains all the critical points of the maps $l_t, t\in D_{\delta}(t_0)$. It follows from Lemma \ref{lm3.2} that $U(a)$ is bounded, hence, is a compact set.  By the same argument as in proof of Theorem 3.1 in \cite{HT}, the restriction
$$F_{|F^{-1}(D_\delta(t_0))\setminus U(a)}: F^{-1}(D_\delta(t_0))\setminus U(a)\to D_\delta(t_0)$$
is a trivial fibration. Hence $t_0\notin B_{\infty}(F).$

i) $\Longrightarrow $ ii): By contradiction, assume that there exist critical points of $l_t$ going to infinity as $t\to t_0$. 

Let 
$$K :=U(a)=\{|L|\leqslant a\}\cap \overline{F^{-1}(D_\delta(t_0))},$$
 where $|a| \gg 1$ such that all critical point of $l_{t_0}$, all critical points in the first type of $L_t, t\in D_{\delta}(t_0)$  and the points of the set $A(F)$ are contained in $K$. It follows from the assumption that for arbitrarily small $\delta$, there exists $t\in D_\delta(t_0)$ such that $l_t$ has critical points $P_1(t), \dots, P_m(t)$ that do not belong to $K$.

Let $D_{\epsilon_i}, i=1, \dots, m,$ be the disc centered at $\alpha_i := L(P_i(t))$ with radius $\epsilon_i$ small enough. We consider the following restrictions
$$L: (F^{-1}(t_0)\setminus K)\setminus \cup_{i=1}^{m}l_{\delta}^{-1}(D_{\epsilon_i})\to (\Bbb{C}\setminus l_{\delta}(K))\setminus \cup_{i=1}^{m}D_{\epsilon_i}$$
and
$$L: (F^{-1}(t)\setminus K)\setminus \cup_{i=1}^{m}l_{\delta}^{-1}(D_{\epsilon_i})\to (\Bbb{C}\setminus l_{\delta}(K))\setminus \cup_{i=1}^{m}D_{\epsilon_i}.$$
These maps are well-defined. By the similar arguments as in Proof of Lemma \ref{lm3.4}, we can prove that these maps are $d-$sheeted unbranched coverings. As a consequence, we have the following
\begin{align*}
\chi((F^{-1}(t_0)\setminus K)\setminus \cup_{i=1}^{m}l_{\delta}^{-1}(D_{\epsilon_i}))&=\chi((F^{-1}(t)\setminus K)\setminus \cup_{i=1}^{m}l_{\delta}^{-1}(D_{\epsilon_i}))\\
&= d\chi((\Bbb{C}\setminus l_{\delta}(K))\setminus \cup_{i=1}^{m}D_{\epsilon_i}).
\end{align*}
Now, for each $i=1, \ldots, m$ and $t\in D_{\delta}(t_0)$ let us consider the restricted map
$$L^i_t:= L_{|F^{-1}(t)\cap L^{-1}(D_{\epsilon_i})}: F^{-1}(t)\cap L^{-1}(D_{\epsilon_i})\to D_{\epsilon_i}.$$
Since $V_t =F^{-1}(t)\cup A(F),$ we have $\Bbb{C}\setminus L(K)\subset L(F^{-1}(t))$. Hence, we can choose $\epsilon_i$ small enough such that $L^i_t$ is surjective. Moreover, for all $t$ the map $L^i_t$ is proper, for $t \neq t_0$ the map $L^i_t$ has critical points $P_i(t)$, and $L^i_{t_0}$ has no critical points, then by using the same argument as in the proof of Theorem 3.1 in \cite{HT}, we have
$$\chi(F^{-1}(t_0)\cap L^{-1}(D_{\epsilon_i}))=d,$$
and
$$\chi(F^{-1}(t)\cap L^{-1}(D_{\epsilon_i}))=d-r_i,$$
where $r_i$ is the multiplicity of the critical point $P_i(t)$ of the map $l_t$.

It follows from the Mayer-Vietoris sequence that
$$\chi(F^{-1}(t_0)\setminus K)- \chi(F^{-1}(t)\setminus K)=\sum_{i=1}^m r_i\neq 0,$$
since there are critical points of $l_t$ tending to infinity when $t$ tends to $t_0$. By applying Lemma \ref{lm3.4}, we get $\chi(F^{-1}(t))\neq \chi(F^{-1}(t_0))$ for all $t$ near $t_0$. Thus $t_0\in B_{\infty}(F).$
\end{proof}

Let $\delta (y, t) = \textrm{disc}_x(f-tg)$ be the discriminant of $f-tg$ with respect to $x$. Then the critical points of $l_t$ are $(x(t), y(t))$ such that $y(t)$ is a root of $\delta(y, t)=0$. These  points go to infinity as $t\to t_0$ if and only if $y(t)\to \infty$ when $t\to t_0$. We can write
$$\delta(y, t) = q_k(t)y^k + q_{k-1}y^{k-1} + \cdots.$$
Then $\delta (y, t)$ has a root tending to infinity when $t\to t_0$ if and only if $q_k(t_0) = 0.$ The following is an immediate corollary of Theorem \ref{thrm4.1}.

 \begin{corollary}
Let $F= f/g: \Bbb{C}^2\setminus\{g= 0\}\to \Bbb{C}$ be a rational function, where $f, g\in \mathbb{C}[x, y]$ have no common factor and $t_0\notin K_0(F)\cup K_1(F)$ such that
$$\deg(f-t_0g)= \deg_x(f-t_0g)= \max\{\deg f, \deg g\}.$$
Then, $t_0\in B_{\infty}(F)$ if and only if $q_k(t_0) = 0.$
\end{corollary}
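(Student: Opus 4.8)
The plan is to derive the corollary directly from Theorem \ref{thrm4.1}, whose equivalence reduces the statement ``$t_0 \in B_\infty(F)$'' to the purely algebraic condition that some critical point of $l_t$ escapes to infinity as $t \to t_0$. So the entire task is to translate condition (ii) of Theorem \ref{thrm4.1} into the vanishing of the leading coefficient $q_k(t_0)$ of the discriminant $\delta(y,t) = \operatorname{disc}_x(f-tg)$.

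First I would recall that, under the standing hypothesis $\deg(f-t_0g) = \deg_x(f-t_0g) = \max\{\deg f, \deg g\} = d$, for $t$ near $t_0$ the polynomial $f - tg$ has degree $d$ in $x$, so $L_t : V_t \to \mathbb{C}$, $(x,y) \mapsto y$, is a $d$-sheeted branched covering (Lemma \ref{lm3.1}), and its branch points are exactly the values $y$ where $\delta(y,t) = \operatorname{disc}_x(f-tg)(y) = 0$. By Remark \ref{rm4.1}, the critical points of $l_t = L_{|F^{-1}(t)}$ are among the critical points of $L_t$; those of the first type stay bounded as $t \to t_0$, while the ones escaping to infinity are precisely the ``extra'' critical points that appear because a root $y(t)$ of $\delta(y,t)$ tends to $\infty$. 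Thus condition (ii) of Theorem \ref{thrm4.1} fails if and only if $\delta(y,t)$ has a root $y(t) \to \infty$ as $t \to t_0$.

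Next I would make precise the elementary fact about families of polynomials: writing $\delta(y,t) = q_k(t) y^k + q_{k-1}(t) y^{k-1} + \cdots + q_0(t)$ with $q_k \not\equiv 0$ (so that $k$ is the generic $y$-degree of the discriminant), the number of roots of $\delta(\cdot, t)$ in $\mathbb{C}$ is $k$ for $t$ with $q_k(t) \neq 0$, and drops when $q_k(t) = 0$; by continuity of roots, the ``missing'' roots at such $t$ are exactly those that have escaped to infinity along $t \to t_0$. Hence $\delta(y,t)$ acquires a root going to infinity as $t \to t_0$ if and only if $q_k(t_0) = 0$. Combining this with the previous paragraph, condition (ii) of Theorem \ref{thrm4.1} holds if and only if $q_k(t_0) \neq 0$, and the equivalence (i) $\Leftrightarrow$ (ii) there gives $t_0 \in B_\infty(F) \iff q_k(t_0) = 0$.

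The main obstacle, though minor, is the bookkeeping needed to be sure that a root of $\delta(y,t)$ escaping to infinity genuinely corresponds to a critical point of $l_t$ escaping to infinity (rather than one of $L_t$ that happens to lie in the finite set $A(F)$, which would not count as a critical point of $l_t$): since $A(F)$ is a fixed finite set (Remark following the definition of $A(F)$) and we are looking at points with $\|(x(t),y(t))\| \to \infty$, such points cannot eventually lie in $A(F)$, so this is automatic for $\delta$ small. One should also note that the case $d_t$ not constant in $t$ does not arise here precisely because the hypothesis $\deg(f-t_0g) = \max\{\deg f,\deg g\}$ pins down the $x$-degree near $t_0$. With these remarks the corollary follows immediately.
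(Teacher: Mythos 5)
Your proposal is correct and follows essentially the same route as the paper: the paper also obtains the corollary as an immediate consequence of Theorem \ref{thrm4.1} by identifying the critical points of $l_t$ escaping to infinity with the roots $y(t)\to\infty$ of the discriminant $\delta(y,t)=\operatorname{disc}_x(f-tg)$, which occur exactly when the leading coefficient $q_k$ vanishes at $t_0$. Your additional bookkeeping (properness of $L_t$ forcing $y(t)\to\infty$, and the finiteness of $A(F)$ ensuring the escaping points are genuine critical points of $l_t$) matches Remark \ref{rm4.1} and Lemmas \ref{lm3.1}--\ref{lm3.2} in the paper.
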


\begin{theorem}\label{thrm2.1}\label{thrm4.2.9}
Let $F= f/g: \Bbb{C}^2\setminus\{g= 0\}\to \Bbb{C}$ be a rational function, where $f, g\in \mathbb{C}[x, y]$ have no common factor, and let $t_0\notin K_0(F)\cup K_1(F)$ such that
$$\deg(f-t_0g)= \deg_x(f-t_0g)= \max\{\deg f, \deg g\}.$$
Then the followings are equivalent:

(i) $t_0\notin B_{\infty}(F);$

(ii) There exists a compact subset $K$ of $\Bbb{C}^2$ such that
$$\chi(F^{-1}(t_0)\setminus K)> \chi(F^{-1}(t)\setminus K),$$ 
for all $t$ generic.
\end{theorem}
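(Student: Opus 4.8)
The plan is to obtain the equivalence by combining Theorem \ref{thrm4.1} with the truncated Euler characteristic computation that underlies Lemma \ref{lm3.4}. Theorem \ref{thrm4.1} already ties condition (i) to a purely geometric criterion --- whether or not critical points of $l_t=L_{|F^{-1}(t)}$ run off to infinity as $t\to t_0$ --- so the task reduces to recasting that criterion in terms of the Euler characteristics appearing in (ii). I would therefore take Theorem \ref{thrm4.1} as given and work entirely on the bridge between the escaping critical point behaviour and the inequality in (ii).

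The concrete tool is the canonical truncation $K=U(a,\delta)=\{|L|\le a\}\cap\overline{F^{-1}(D_\delta(t_0))}$ of Lemma \ref{lm3.4}, with $a\gg1$ and $\delta$ small, arranged so that $K$ contains $A(F)$ and all critical points of $l_{t_0}$, as well as all first type critical points of $l_t$ for $t\in D_\delta(t_0)$. Outside $K$ the map $L$ has no first type critical points, so --- using the surjectivity of $L$ on each $V_t$ furnished by the properness of Lemmas \ref{lm3.1} and \ref{lm3.2}, together with the resulting $d$ sheeted covering of $F^{-1}(t)\setminus K$ over $\mathbb{C}\setminus L(K)$ that is unbranched away from the escaping critical values --- one obtains, exactly as in the proof of Theorem \ref{thrm4.1}, the relation
$$\chi(F^{-1}(t_0)\setminus K)-\chi(F^{-1}(t)\setminus K)=\sum_{i=1}^{m}r_i,$$
where $P_1(t),\dots,P_m(t)$ are the critical points of $l_t$ that leave $K$ and each $r_i>0$ is the multiplicity of $P_i(t)$. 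Thus, for this $K$, the strict inequality in (ii) occurs exactly when escaping critical points are present; combined with Theorem \ref{thrm4.1}, this establishes the equivalence of (i) and (ii) for the canonical compact set.

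The step I expect to be the main obstacle is the existential quantifier on the compact set in (ii): Theorem \ref{thrm4.1} and Lemma \ref{lm3.4} control only the canonical truncation $U(a,\delta)$, whereas (ii) permits an arbitrary compact $K'$. To complete the equivalence I would show that, for $t$ generic near $t_0$, the sign of the truncated jump does not depend on the compact set chosen. The key ingredient is once more the properness of $L_\delta$ on $\cup_{t\in\overline{D_\delta(t_0)}}V_t$ (Lemma \ref{lm3.2}): outside a large disc $\{|L|\le a\}$ the fibers $F^{-1}(t)$ form a trivial unbranched covering family, so replacing $U(a,\delta)$ by any other compact $K'$ alters $\chi(F^{-1}(t)\setminus K')$ only by a term that is constant in $t$ and hence cancels in the difference. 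Carrying out this stability argument --- so that no poorly adapted compact set can either manufacture or destroy the strict inequality --- is the technical heart of the proof, and the place where the properness statements of Lemmas \ref{lm3.1} and \ref{lm3.2} do the essential work.
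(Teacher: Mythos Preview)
Your forward direction---``$t_0\in B_\infty(F)$ implies strict inequality for the canonical compact $K=U(a,\delta)$''---is exactly the paper's: the identity
\[
\chi(F^{-1}(t_0)\setminus K)-\chi(F^{-1}(t)\setminus K)=\sum_{i}r_i=\rho,
\]
extracted from the proof of Theorem~\ref{thrm4.2.7}, gives strict inequality precisely when escaping critical points exist.

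Where you diverge---and where a gap opens---is the converse, the handling of an \emph{arbitrary} compact $K'$. You propose a stability argument: compare $K'$ to the canonical $U(a,\delta)$ and claim that the resulting change in the truncated Euler characteristic is constant in $t$. That claim is not evident and would itself require the triviality you are after. After enlarging $a$ so that $K'\subset U(a,\delta)$, the region $U(a,\delta)\setminus K'$ still contains the first-type critical points of $l_t$, which move with $t$; so $\chi\bigl(F^{-1}(t)\cap(U(a,\delta)\setminus K')\bigr)$ has no reason to be constant without a further argument. The properness of $L_\delta$ from Lemmas~\ref{lm3.1}--\ref{lm3.2} controls only what happens outside $\{|L|\le a\}$; it says nothing about how an arbitrary compact sits inside the bounded region.

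The paper avoids this comparison altogether. For the converse it argues by contradiction: suppose some compact $K$ yields the strict inequality but $t_0\notin B_\infty(F)$. Since also $t_0\notin K_0(F)\cup K_1(F)$, Lemma~\ref{thrmbifur-huuty1} gives $t_0\notin B(F)$, so the \emph{full} map $F$ is locally $C^\infty$-trivial over a disc $D\ni t_0$. A global trivialization $\Phi:F^{-1}(D)\to F^{-1}(t_0)\times D$ then forces $\chi(F^{-1}(t)\setminus K)=\chi(F^{-1}(t_0)\setminus K)$ for $t\in D$, contradicting the assumed strict inequality. No comparison of compacts and no fine analysis of the covering structure is needed for this direction---the work has already been absorbed into Lemma~\ref{thrmbifur-huuty1}. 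You should invoke that lemma directly rather than attempting the stability argument.
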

\begin{proof}
(i) $\Longrightarrow $ (ii): Let $K:=U(a, \delta)=\{|L|\leqslant a\}\cap \overline{F^{-1}(D_{\delta}(t_0))}.$ According to the proof of Theorem \ref{thrm4.2.7}, if $a$ is large enough and $\delta$ is small enough then
$$\chi(F^{-1}(t_0)\setminus K)- \chi(F^{-1}(t)\setminus K)= \rho, \forall t\in D_{\delta}(t_0),$$
where $\rho$ is the number of critical points  $P(t)$, counting with multiplicity, of the map $l_t$ such that $\|P(t)\|\to \infty$ as $t\to t_0.$ 

Since $t_0\in B_{\infty}(F)$ then according to Theorem \ref{thrm4.2.7}, we have $\rho \neq 0.$ Thus
$$\chi(F^{-1}(t_0)\setminus K)> \chi(F^{-1}(t)\setminus K),$$
for all $t$ different and near $t_0$.

(ii) $\Longrightarrow $ (i): Let $K$ be the compact set such that
$$\chi(F^{-1}(t_0)\setminus K)> \chi(F^{-1}(t)\setminus K).$$
By contradiction, assume that $t_0\notin B_{\infty}(F)$. Since $t_0\notin K_0(F)\cup K_1(F)$ then $F$ defines a locally $C^{\infty}-$trivial fibration at $t_0$. Let $D$ be the neighbourhood of $t_0$ and $\Phi: F^{-1}(D)\to F^{-1}(t_0)\times D$ be the diffeomorphism trivializing $F_{|F^{-1}(D)}$.

Hence, the restriction $\Phi_{|F^{-1}(D)\setminus K}$ induces a diffeomorphism trivializing the map $$F_{|F^{-1}(D)\setminus K}\colon F^{-1}(D)\setminus K\to D.$$ This implies that
$$\chi(F^{-1}(t)\setminus K)= \chi(F^{-1}(t_0)\setminus K)$$
which contradicts the assumption. Thus $t_0\in B_{\infty}(F)$.
\end{proof}

\begin{theorem}\label{thrm2.2}\label{thrm4.2.10}
Let $F= f/g: \Bbb{C}^2\setminus\{g= 0\}\to \Bbb{C}$ be a rational function, where $f, g\in \mathbb{C}[x, y]$ have no common factor, and let $t_0\notin K_0(F)\cup K_1(F)$ such that
$$\deg(f-t_0g)= \deg_x(f-t_0g)= \max\{\deg f, \deg g\}.$$
Then the followings are equivalent:

(i) $t_0\in B_{\infty}(F);$

(ii) $\chi(\{f-t_0g= 0\})> \chi(\{f-tg= 0\})$, for all $t$ generic;

(iii) $\chi(F^{-1}(t_0))> \chi(F^{-1}(t))$, for all $t$ generic.
\end{theorem}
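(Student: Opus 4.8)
The plan is to deduce everything from Theorems \ref{thrm4.2.7} and \ref{thrm4.2.9} together with Lemma \ref{lm3.4}, by showing that the various Euler characteristic comparisons all track the same quantity: the number $\rho$ of critical points of $l_t$ escaping to infinity as $t\to t_0$. First I would establish the equivalence (i)$\Longleftrightarrow$(iii). For the direction (i)$\Longrightarrow$(iii): if $t_0\in B_\infty(F)$, then by Theorem \ref{thrm4.2.7} there is at least one critical point of $l_t$ tending to infinity, so $\rho>0$; the computation carried out in the proof of Theorem \ref{thrm4.2.9} (using Lemma \ref{lm3.4}) gives $\chi(F^{-1}(t_0)\setminus U(a,\delta)) - \chi(F^{-1}(t)\setminus U(a,\delta)) = \rho$ for $a$ large and $\delta$ small, and Lemma \ref{lm3.4} then upgrades this to $\chi(F^{-1}(t_0)) - \chi(F^{-1}(t)) = \rho > 0$ for all $t$ near (and different from) $t_0$. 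Since generic $t$ near $t_0$ is in particular regular and close, this yields (iii). For the converse (iii)$\Longrightarrow$(i): I argue by contraposition. If $t_0\notin B_\infty(F)$, then since also $t_0\notin K_0(F)\cup K_1(F)$, Theorem \ref{thm12} (or directly Lemma \ref{thrmbifur-huuty1}) shows $t_0\notin B(F)$, so $F_{|F^{-1}(D)}$ is $C^\infty$-trivial over a neighborhood $D$ of $t_0$; hence $\chi(F^{-1}(t)) = \chi(F^{-1}(t_0))$ for $t\in D$, contradicting the strict inequality in (iii). (Alternatively, one invokes Theorem \ref{thrm4.2.7}: no escaping critical point means $\rho=0$, so by the displayed formula $\chi(F^{-1}(t_0)) = \chi(F^{-1}(t))$.)

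Next I would establish (ii)$\Longleftrightarrow$(iii). This is immediate from Lemma \ref{lm3.4}, whose first displayed line reads
$$\chi(V_t) - \chi(V_{t_0}) = \chi(F^{-1}(t_0)) - \chi(F^{-1}(t)),$$
valid for $a$ large and $\delta$ small; since $V_{t} = \{f - tg = 0\}$ and $V_{t_0} = \{f - t_0 g = 0\}$, the inequality $\chi(\{f-t_0g=0\}) > \chi(\{f-tg=0\})$ is literally the same as $\chi(F^{-1}(t_0)) > \chi(F^{-1}(t))$ for generic (hence regular, hence covered by the lemma) $t$ near $t_0$. One small point to be careful about: Lemma \ref{lm3.4} is stated for $t_0\notin K_0(F)$ and requires $t$ near $t_0$, while (ii) and (iii) quantify over "$t$ generic"; I would remark that since $t_0$ is not a bifurcation value away from a finite set, "generic near $t_0$" and "generic" give the same Euler characteristic for the fibers $F^{-1}(t)$, so the two formulations agree.

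I expect the main (minor) obstacle to be purely bookkeeping: making sure the hypotheses of Lemmas \ref{lm3.4} and the quantifier "for all $t$ generic" are reconciled, i.e. that the strict inequality genuinely holds for \emph{every} generic $t$ (not merely those in a small punctured disc), which follows because the Euler characteristic of $F^{-1}(t)$ is locally constant on $\mathbb{C}\setminus B(F)$ and $B(F)$ is finite. Beyond that, the theorem is essentially a repackaging: all three conditions are equivalent to the single geometric condition "$\rho \neq 0$" from Theorem \ref{thrm4.2.7}, and the content was already done in Lemma \ref{lm3.4} and the proof of Theorem \ref{thrm4.2.9}. I would therefore present the proof concisely as: (ii)$\Leftrightarrow$(iii) by Lemma \ref{lm3.4}; (iii)$\Rightarrow$(i) by contraposition via $C^\infty$-triviality; (i)$\Rightarrow$(iii) via Theorem \ref{thrm4.2.7} and the formula $\chi(F^{-1}(t_0)) - \chi(F^{-1}(t)) = \rho$.
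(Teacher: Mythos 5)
Your proposal is correct and takes essentially the same route as the paper, whose entire proof of this theorem is the one line ``straightforward from Lemma \ref{lm4.2.6} and the proof of Theorem \ref{thrm4.2.7}'' --- i.e.\ exactly your reduction of (i), (ii), (iii) to the single condition $\rho\neq 0$. One caveat: the first displayed identity of Lemma \ref{lm4.2.6} as printed has a sign slip (its own proof derives $\chi(V_t)-\chi(V_{t_0})=\chi(F^{-1}(t))-\chi(F^{-1}(t_0))$, consistent with $F^{-1}(t)=V_t\setminus A(F)$ and $A(F)$ finite), and your step ``(ii) is literally (iii)'' tacitly uses this corrected version rather than the formula you quote, so you should state the relation in that form.
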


\begin{proof}
The proof is straightforward from Lemma \ref{lm4.2.6} and the proof of Theorem \ref{thrm4.2.7}.
\end{proof}

\subsection{Analytic characterization}

In this section, we will determine the critical values at infinity of rational functions in two variables in terms of Malgrange condition and M-tameness.
Assume that $d:= \deg{f}> \deg{g}.$

\begin{definition}
{\rm (\cite{LS}, \cite{P})  Let $H(t, x): \Bbb{C}^{n+1}\to \Bbb{C}$ be a analytic function such that for every $t$ the point $0\in \Bbb{C}^n$ is an isolated singular point of $H(t, x)$. Then the following set
$$\Gamma_{H} = \overline{\{(t, x)\in  \Bbb{C}^{n+1}: \partial H/\partial t \neq 0, \partial H/\partial x_1 = \ldots= \partial G/\partial x_n = 0\}}$$
 is called the {\it relative polar curve} of the family of hypersurface $\{x\in \mathbb{C}^n : H(t, x) = 0\}$.
}
\end{definition}

\begin{theorem}{\rm (\cite{LS}, \cite{P})}\label{thrmLS} Let $H(t, x): \Bbb{C}^{n+1}\to \Bbb{C}$ be an analytic function such that for every $t$ near $t_0$ enough the origin is an isolated singular point of $H_t(x):= H(t, x)$. Then, the followings are equivalent

(i) $|\partial H/\partial t(t, x)|\ll \|(\partial H/\partial x_1, \ldots, \partial G/\partial x_n)(t, x)\|$
for all $(t, x)$ near $(t_0, 0)$ enough; 

(ii)  $\mu_0(H(t, x)) = \mu_0(H(t_0, x))$ for all $t$ near $t_0$ enough;

(iii) There exist a neighbourhood $B$ of $(t_0, 0)$ such that $\Gamma_H\cap B= \emptyset$.
\end{theorem}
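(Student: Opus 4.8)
The plan is to exhibit the cycle of implications $(ii)\Rightarrow(iii)\Rightarrow(i)\Rightarrow(ii)$, which is the economical route: the genuinely geometric content is the equivalence of the $\mu$-constant condition with emptiness of the relative polar curve, and the Malgrange-type inequality $(i)$ falls out of $(iii)$ by a curve-selection argument. First I would recall the deformation set-up: since for every $t$ near $t_0$ the origin is an isolated singularity of $H_t$, the gradient $(\partial H/\partial x_1,\dots,\partial H/\partial x_n)$ vanishes at $(t,0)$ only at isolated points of each slice, so $\Gamma_H$ is (the closure of) a curve, and near $(t_0,0)$ its branches are analytic arcs parametrized by a Puiseux parameter. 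This is the place where $n\neq 3$ would matter if we needed the topological part, but here we only use the analytic/algebraic structure.

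For $(ii)\Rightarrow(iii)$: suppose $\Gamma_H\cap B\neq\emptyset$ for every neighbourhood $B$ of $(t_0,0)$. Then there is a branch of $\Gamma_H$ accumulating at $(t_0,0)$, i.e.\ an arc $(t(\tau),x(\tau))\to(t_0,0)$ with $\partial H/\partial x=0$ identically along it but $\partial H/\partial t\neq 0$ for $\tau\neq 0$. Along such an arc the critical point $x(\tau)\neq 0$ of $H_{t(\tau)}$ is distinct from the origin, so the total Milnor number of $H_{t(\tau)}$ in a fixed small ball splits as $\mu_0(H_{t(\tau)})$ plus a strictly positive contribution from $x(\tau)$; by conservation of the Milnor number under deformation (the sum of local Milnor numbers in a Milnor ball of $H_{t_0}$ is constant in $t$) this forces $\mu_0(H_{t(\tau)})<\mu_0(H_{t_0})$ for $\tau\neq 0$, contradicting $(ii)$. (One must check that $\partial H/\partial t\neq 0$ along the branch really does produce a genuinely moving critical point and not a spurious one sitting at the origin; this is exactly the role of the condition $\partial H/\partial t\neq 0$ in the definition of $\Gamma_H$, together with the fact that along the $x$-axis $\{x=0\}$ one has $\partial H/\partial x\equiv 0$ trivially, so the closure in the definition excludes that trivial stratum.)

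For $(iii)\Rightarrow(i)$: argue by contradiction via curve selection. If the inequality $|\partial H/\partial t|\ll\|\partial H/\partial x\|$ fails in every neighbourhood of $(t_0,0)$, then the set $Z=\{(t,x): \|\partial H/\partial x(t,x)\|\le |\partial H/\partial t(t,x)|\}$ has $(t_0,0)$ in its closure; applying the curve selection lemma to this semianalytic set gives a real-analytic arc $(t(\tau),x(\tau))\to(t_0,0)$ along which $\|\partial H/\partial x\|\le|\partial H/\partial t|$. On this arc, using $0=\frac{d}{d\tau}H(t(\tau),x(\tau))=\sum_i \frac{\partial H}{\partial x_i}\dot x_i+\frac{\partial H}{\partial t}\dot t$ and the Cauchy--Schwarz / Łojasiewicz comparison of the orders in $\tau$ of $\|\partial H/\partial x\|$, $|\partial H/\partial t|$, $\|\dot x\|$, $\|\dot t\|$, one deduces that in fact $\partial H/\partial x$ vanishes to infinite order, hence identically, along a suitable arc; together with $\partial H/\partial t\neq 0$ generically on it this arc lies in $\Gamma_H$ and accumulates at $(t_0,0)$, contradicting $(iii)$. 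Finally $(i)\Rightarrow(ii)$ is the classical Lê--Saito / Teissier statement that the Malgrange-type inequality along the parameter direction forces the relative polar curve to be empty, whence the nearby fibres of the $x$-gradient map have no critical point splitting off from the origin, so $\mu_0$ is constant; I would simply cite \cite{LS}, \cite{P} for this direction, or re-derive it as the contrapositive of the argument in $(ii)\Rightarrow(iii)$ run backwards.

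The main obstacle is the implication $(iii)\Rightarrow(i)$: turning "no polar curve" into a quantitative Łojasiewicz-type inequality requires a careful bookkeeping of Puiseux orders along the selected arc (and an appeal to the curve selection lemma for semianalytic sets), and one has to rule out the degenerate possibility that the arc witnessing failure of $(i)$ hides inside the trivial stratum $\{x=0\}$ — which is handled precisely because on that stratum $\partial H/\partial x\equiv 0$ while $\partial H/\partial t$ may or may not vanish, and the closure operation in the definition of $\Gamma_H$ is what glues a genuine polar branch to the picture. Everything else is standard conservation-of-multiplicity and curve-selection bookkeeping.
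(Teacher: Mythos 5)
The paper does not prove this statement at all: it is quoted from L\^e--Saito \cite{LS} and Parusinski \cite{P} and used as a black box, so your proposal must stand on its own. Two of your three arrows are fine in outline: (ii)$\Rightarrow$(iii) by conservation of the Milnor number is the standard argument (a branch of $\Gamma_H$ through $(t_0,0)$ cannot lie in $\{x=0\}$, since $H(t,0)\equiv 0$ forces $\partial H/\partial t\equiv 0$ there, nor in a slice $\{t=\mathrm{const}\}$ by isolatedness, hence it produces critical points of $H_t$ splitting off the origin and $\mu_0(H_t)$ drops); and the easy direction relating (i) and (iii) is immediate because on $\Gamma_H$ one has $\partial H/\partial x=0$ while $\partial H/\partial t\neq 0$.

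The genuine gap is your (iii)$\Rightarrow$(i). Curve selection applied to $Z=\{\|\partial H/\partial x\|\le|\partial H/\partial t|\}$ gives an arc $\lambda(\tau)\to(t_0,0)$ along which $\mathrm{ord}_\tau\|\partial H/\partial x\|\ge \mathrm{ord}_\tau|\partial H/\partial t|$, but nothing in your computation forces $\partial H/\partial x$ to vanish identically (or to infinite order) along it: both sides generically vanish to the same \emph{finite} order, and the arc need not meet the critical locus $\{\partial H/\partial x=0\}$ at all, so it cannot be upgraded to a branch of $\Gamma_H$. The identity $0=\tfrac{d}{d\tau}H(\lambda(\tau))$ (which in any case presupposes $\lambda\subset H^{-1}(0)$, not guaranteed by curve selection on $Z$) only yields order relations of the type $\mathrm{ord}(t')\ge\mathrm{ord}\|x'\|$, not a contradiction. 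Note also that the set-theoretic emptiness of $\Gamma_H$ gives, via the {\L}ojasiewicz inequality, only $|\partial H/\partial t|\le C\|\partial H/\partial x\|^{\theta}$ for some $\theta\in(0,1]$, which is strictly weaker than the ratio tending to $0$; this is exactly why the implication from $\mu$-constancy (or emptiness of the polar curve) to the inequality (i) is the hard content of L\^e--Saito, and it requires either their original argument or Teissier's integral-closure and polar-variety machinery, by which the valuative test for (i) is reduced to arcs lying \emph{inside} the polar curve --- the very reduction your sketch asserts without justification. Finally, your (i)$\Rightarrow$(ii) is ``cite \cite{LS}, \cite{P}'', which is circular since the entire theorem is the cited result, and ``running (ii)$\Rightarrow$(iii) backwards'' only yields $\neg$(iii)$\Rightarrow\neg$(ii), i.e.\ the same arrow, not its converse. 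As it stands the cycle of implications is not closed.
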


Since $ \deg{f}> \deg{g}$ then $\deg (f-tg)= \deg f$ for all $t$ and the set $V^t_{\infty}$ of points at infinity of $V_t$ does not depend on $t$. Denote  $V_{\infty}:= V^t_{\infty}$.
 
Considering a point $p_0\in V_{\infty}$. Without loss of generality, we may assume that $p_0= [1 : 0 : 0] \in \mathbb{C}P^2$. Then $(y, z)$ forms a local system of coordinates near $p_0$. Let $G(y,z, t) := G(1, y, z, t)$. Then either $(0, 0)$ is a regular point or an isolated singular point of $G(y, z, t)$.

The following is a version of Lemma 3.1 in \cite{P} for rational functions.

\begin{lemma}\label{lm4.2.17}\label{lm4.2} Let $F=f/g\colon \Bbb{C}^2\setminus\{g= 0\}\to \Bbb{C}$ be a rational function, where $f, g\in \Bbb{C}[x, y]$ and $\deg f> \deg g.$ Let $t_0\in \Bbb{C}\setminus (K_0(F)\cup K_1(F))$ and  $p_0 = [1, 0, 0]\in V_{\infty}$. Assume that, either $p_0$  is a regular point of $\overline{V_{t_0}}$ or  $p_0$  is a singular point of  $\overline{V_{t_0}}$ and $\mu_{(0, 0)}(G(y, z, t)) = \mu_{(0, 0)}(G(y, z, t_0))$ for all $t$ near $t_0$ enough.

Then, for all positive integer $N$, the following holds
\begin{align}
|\partial G/\partial t|\ll \|(\partial G/\partial y, z^{(N-1)/N}\partial G/\partial z)(y, z, t)\|\label{bdta_G}
\end{align}
as $(y, z, t)\to (0, t_0).$
\end{lemma}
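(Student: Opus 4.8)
The plan is to reduce the desired Łojasiewicz-type inequality \eqref{bdta_G} to the classical statement of Theorem \ref{thrmLS} applied to the function $G(y,z,t)$, by first showing that the hypotheses on $\mu_{(0,0)}$ (or regularity) let us invoke the equivalence of (i), (ii) and (iii) there, and then by a careful bookkeeping of the extra factor $z^{(N-1)/N}$ on the right-hand side. First I would treat the two cases separately. If $p_0=[1:0:0]$ is a regular point of $\overline{V_{t_0}}$, then $G(y,z,t_0)$ has a nonvanishing differential at $(0,0)$; by continuity the same is true for all $t$ near $t_0$, so $\|(\partial G/\partial y,\partial G/\partial z)(y,z,t)\|$ is bounded below by a positive constant near $(0,t_0)$, while $\partial G/\partial t$ is continuous and hence bounded, so \eqref{bdta_G} is immediate once we know $z^{(N-1)/N}\partial G/\partial z$ is not the only surviving term — and indeed $\partial G/\partial y$ alone already dominates. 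Thus the regular case is essentially trivial and the content is in the singular case.

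In the singular case, the hypothesis is exactly condition (ii) of Theorem \ref{thrmLS} for the family $G_t(y,z)=G(y,z,t)$ at the isolated singular point $(0,0)$, so by that theorem we get condition (i): $|\partial G/\partial t(y,z,t)|\ll \|(\partial G/\partial y,\partial G/\partial z)(y,z,t)\|$ near $(0,t_0)$. The issue is that \eqref{bdta_G} asks instead for domination by $\|(\partial G/\partial y,\, z^{(N-1)/N}\partial G/\partial z)\|$, in which the $z$-partial has been damped by a fractional power of $z$. So I would need to control $\partial G/\partial z$ when $z$ is small. The key observation is the quasi-homogeneous structure of $G$: writing $G(x,y,z,t)=z^d f(x/z,y/z)-tz^d g(x/z,y/z)$ and setting $x=1$, one has the Euler-type relation $y\,\partial G/\partial y + z\,\partial G/\partial z = d\,G - (\text{lower order in }z)$ coming from homogeneity of the projective equation, together with the fact that the term $\partial G/\partial x$ (the ``missing'' coordinate) is itself expressible through $G$, $y\partial G/\partial y$ and $z\partial G/\partial z$. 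From such an identity one extracts that on the relative polar curve, where $\partial G/\partial y=0$, the quantity $z\,\partial G/\partial z$ is comparable to $G$ itself, which on the polar curve is in turn controlled by $t-t_0$ and hence by the other data; more directly, one gets $|\partial G/\partial z|\cdot|z|\lesssim |G| + |y|\,|\partial G/\partial y|$, so after dividing by $|z|^{1/N}$ the needed lower bound degrades only by the harmless factor $|z|^{1-1/N}=z^{(N-1)/N}$. Assembling: on the set where $|\partial G/\partial y|$ already dominates $|\partial G/\partial t|$ we are done by Theorem \ref{thrmLS}(i); on the complementary set we use the Euler relation to replace $\partial G/\partial z$ by $z^{(N-1)/N}\partial G/\partial z$ at the cost of the damping factor, invoking the curve-selection lemma along a hypothetical violating sequence $(y_k,z_k,t_k)\to(0,t_0)$ to reach a contradiction with Theorem \ref{thrmLS}(i) or (iii).

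More precisely, I would argue by contradiction via the curve selection lemma, as in Lemma 3.1 of \cite{P}: suppose \eqref{bdta_G} fails, so there is an analytic arc $(y(s),z(s),t(s))\to(0,0,t_0)$ along which $|\partial G/\partial t|\ge c\,\|(\partial G/\partial y,\, z^{(N-1)/N}\partial G/\partial z)\|$ for some $c>0$. Expand everything in Puiseux series in $s$ and compare orders of vanishing. Using Theorem \ref{thrmLS} we know $|\partial G/\partial t|\ll \|(\partial G/\partial y,\partial G/\partial z)\|$ along the arc, so the failure forces $\operatorname{ord}_s(\partial G/\partial z) < \operatorname{ord}_s(z^{(N-1)/N}\partial G/\partial z)$ to be the decisive term and moreover $\operatorname{ord}_s(\partial G/\partial y)$ to be strictly larger than $\operatorname{ord}_s(\partial G/\partial t)$. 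Then the Euler identity $x\,\partial G/\partial x + y\,\partial G/\partial y + z\,\partial G/\partial z = d\,G$ (valid since $G$ is homogeneous of degree $d$ in $(x,y,z)$ up to the $t$-term which is also degree $d$), evaluated at $x=1$, together with the relation $\partial G/\partial t$ and the explicit shape of $G$, pins down $\operatorname{ord}_s(\partial G/\partial z)$ in terms of $\operatorname{ord}_s(z)$ and $\operatorname{ord}_s(G)$; since along the arc $G=t-t_0$ (because the arc lies over varying fibers — here one must be a little careful about whether the arc stays in a single fiber or not, and I would split accordingly), one derives the numerical inequality that the damping $z^{(N-1)/N}$ is more than enough to restore domination, contradicting the assumed failure. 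The main obstacle I anticipate is precisely this Puiseux-order bookkeeping with the fractional exponent $(N-1)/N$: one must check that the loss incurred by multiplying $\partial G/\partial z$ by $z^{(N-1)/N}$ is strictly less than the gain guaranteed by Theorem \ref{thrmLS}, uniformly in the (arbitrary) integer $N$, and getting the Euler/homogeneity identity into a form that cleanly separates the $z$-variable is the technical crux. Everything else — the regular case, the reduction to curve selection, the appeal to Theorem \ref{thrmLS} — is routine.
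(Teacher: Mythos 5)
Your framework (split into regular and singular cases, invoke Theorem \ref{thrmLS}, argue by curve selection) is the right one, but the central step --- passing from the undamped inequality $|\partial G/\partial t|\ll\|(\partial G/\partial y,\partial G/\partial z)\|$ to the damped one with the factor $z^{(N-1)/N}$ --- is not actually carried out, and the mechanism you sketch does not close. The Euler identity reads $x\,\partial G/\partial x+y\,\partial G/\partial y+z\,\partial G/\partial z=dG$, so your bound $|z\,\partial G/\partial z|\lesssim |G|+|y|\,|\partial G/\partial y|$ silently drops the uncontrolled term $\partial G/\partial x$; likewise $G$ is not equal to $t-t_0$ along a general arc. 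More decisively, the Puiseux bookkeeping you propose is insufficient on its own: writing $a=\operatorname{ord}(z)$, $\beta=\operatorname{ord}(\partial G/\partial t)$, $\gamma=\operatorname{ord}(\partial G/\partial y)$, $\delta=\operatorname{ord}(\partial G/\partial z)$ along a violating arc, Theorem \ref{thrmLS}(i) for $G$ gives $\beta>\min(\gamma,\delta)$, while failure of \eqref{bdta_G} gives $\beta\le\min\bigl(\gamma,\delta+a(N-1)/N\bigr)$; these are mutually consistent (take $\delta<\beta\le\delta+a(N-1)/N<\gamma$), so no contradiction follows from these order relations alone.

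The paper supplies the two ingredients your sketch is missing. First, rather than upgrading the inequality for $G$ by hand, it sets $G_N(y,z,t):=G(y,z^N,t)$, so that $\partial G_N/\partial z=Nz^{N-1}\partial G/\partial z$ and \eqref{bdta_G} becomes literally condition (i) of Theorem \ref{thrmLS} for $G_N$ after the change of variable $w=z^N$ (whence $z^{N-1}=w^{(N-1)/N}$); by that theorem it then suffices to show that the relative polar curve $\Gamma_{G_N}$ is empty near $(0,0,t_0)$. Second --- and this is the point your argument never isolates --- one has $\partial G/\partial t=-z^{d}g(1/z,y/z)=-z^{\,d-\deg g}\,\hat g(1,y,z)$ with $d-\deg g\ge 1$, so $\partial G/\partial t$ vanishes identically on $\{z=0\}$. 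Hence every point of $\Gamma_{G_N}$ has $z\neq0$, the factor $Nz^{N-1}$ can be divided out, and $\Gamma_{G_N}$ maps into $\Gamma_G$, which is empty by the $\mu$-constancy hypothesis and the implication (ii)$\Rightarrow$(iii) of Theorem \ref{thrmLS}. The same vanishing of $\partial G/\partial t$ on $\{z=0\}$ is also what rescues your ``trivial'' regular case when $\partial G/\partial y(0,0,t_0)=0$ but $\partial G/\partial z(0,0,t_0)\neq0$, where the claim that $\partial G/\partial y$ alone dominates breaks down. You should rebuild the proof around these two points.
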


\begin{proof}The case that $p_0$ is nonsingular is easy. Now, we assume that for each $t$, $p_0$ is a singular point of $\bar{V_t}$  and $\mu_{(0, 0)}(G(y, z, t)) = \mu_{(0, 0)}(G(y, z, t_0))$ for all $t$ near $t_0$ enough. For each $N>1$, we consider the function
$$G_N(y, z, t) = G(y, z^N, t).$$
Then $\frac{\partial G_N}{\partial y}= \frac{\partial G}{\partial y}, \frac{\partial G_N}{\partial z} = Nz^{N-1}\frac{\partial G}{\partial z}$ and $\frac{\partial G_N}{\partial t}= \frac{\partial G}{\partial t}.$

Since $(0, 0)$ is a singular point of $\{G(y, z, t_0)= 0\}$, it is easy to prove that $(0, 0)$ is also an isolated singular point of $\{G_N(y, z, t) =0\}$ for all $t$.

According to Theorem \ref{thrmLS}, it suffices to show that the relative polar curve:
$$\Gamma_{G_N} = \overline{\{(y, z, t) | \partial G_N/\partial t \neq 0, \partial G_N/\partial y = 0, \partial G_N/\partial z = 0\}}$$
  of the family $\{G_N(y, z, t)= 0\}$ is empty in some small neighborhood of $(0, 0, t_0)$. 

By contradiction, we assume that there exists a sequence $(y_k, z_k, t_k)\to (0, 0, t_0)$ such that
\begin{align} 
\frac{\partial G_N}{\partial t}(y_k, z_k, t_k)\neq 0, \frac{\partial G_N}{\partial y}(y_k, z_k, t_k) = \frac{\partial G_N}{\partial z}(y_k, z_k, t_k)= 0, \forall k. \label{bdta_GN}
\end{align}

We have 
$$\frac{\partial G_N}{\partial t}(y_k, z_k, t_k)= \frac{\partial G}{\partial t}(y_k, z_k, t_k)= -z_k^dg(1/z_k, y_k/z_k)\neq 0.$$
 Since $d> \deg g$ then $z_k\neq 0$. Hence, it follows from (\ref{bdta_GN}) that $\frac{\partial G}{\partial z}(y_k, z_k, t_k)= 0.$ 

That means the relative polar curve  $\Gamma_G$ of the family $\{G(y, z, t)= 0\}$ is not empty in some neighborhood of $(0, 0, t_0).$ This contradicts to the assumption. The proof is complete.
\end{proof}

By the Curve Selection Lemma and by using the inequality in Lemma \ref{lm4.2} for all $N$, we obtain
\begin{align}
|\partial G/ \partial t| \leqslant C \|(\partial G/ \partial y, z \partial G/ \partial z)\|.\label{bdt_aGN2}
\end{align}
By applying the same argument as in the proof of Lemma 3.2 in \cite{P}, we receive the following.

\begin{lemma}\label{lm4.3}
Under the hypothesis in Lemma \ref{lm4.2}, for all $(y, z, t) \in \{G(y, z, t) = 0\}$ and $(y, z, t)\to (0, t_0)$, we have
\begin{align}
|z \partial G/ \partial z| \ll  |\partial G/ \partial y|.\label{bdt_aGN3}
\end{align}
\end{lemma}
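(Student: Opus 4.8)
The plan is to adapt the proof of \cite[Lemma 3.2]{P}, arguing by contradiction. Abbreviate $\partial G/\partial y = \partial_y G$, and similarly $\partial_z G$, $\partial_t G$. Suppose the estimate fails; then for some $\epsilon>0$ the semianalytic set
$$P_\epsilon := \{(y,z,t)\in\mathbb{C}^3 : G(y,z,t) = 0,\ |z\,\partial_z G|^2 > \epsilon^2|\partial_y G|^2\}$$
has $(0,0,t_0)$ in its closure, and I would apply the Curve Selection Lemma \emph{to $P_\epsilon$ itself} to obtain a real-analytic arc $\gamma(s) = (y(s),z(s),t(s))$, $s\in[0,\eta)$, with $\gamma(0) = (0,0,t_0)$ and $\gamma(s)\in P_\epsilon$ for $s>0$. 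Then $G(\gamma(s))\equiv 0$ and $|z\,\partial_z G| > \epsilon|\partial_y G|$ along $\gamma$; the latter forces $z(s)\,\partial_z G(\gamma(s))\ne 0$ for $s>0$, so neither $z$ nor $s\mapsto\partial_z G(\gamma(s))$ is identically zero. This is precisely why the Curve Selection Lemma must be applied to $P_\epsilon$ and not merely to its closure, and I expect this to be the only step requiring genuine care.

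Next I would compare orders of vanishing at $s=0$. Write $\operatorname{ord}(\phi)\in\mathbb{Q}_{\ge 0}\cup\{\infty\}$ for the order at $s=0$ of a real-analytic function $\phi(s)$ (so $|\phi(s)|\asymp s^{\operatorname{ord}(\phi)}$), and set $A := \operatorname{ord}(\partial_y G\circ\gamma)$, $B := \operatorname{ord}(\partial_z G\circ\gamma)$, $C := \operatorname{ord}(\partial_t G\circ\gamma)$, $\nu := \operatorname{ord}(z)\ge 1$ and $\tau := \operatorname{ord}(t-t_0)$. The inequality along $\gamma$ gives $\nu+B\le A$; inequality (\ref{bdt_aGN2}), which holds on a whole neighborhood of $(0,0,t_0)$ and hence along $\gamma$, gives $C\ge\min\{A,\nu+B\} = \nu+B$. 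Differentiating $G(\gamma(s))\equiv 0$ in $s$ yields
$$(\partial_y G)\,y' + (\partial_z G)\,z' + (\partial_t G)\,t' \equiv 0$$
along $\gamma$ (prime $= d/ds$), whose summands have orders $A+\operatorname{ord}(y)-1$, $B+\nu-1$ and $C+\tau-1$ (the first being $\infty$ if $y\equiv 0$, the third if $t\equiv t_0$). Since $\operatorname{ord}(y)\ge 1$ and $\nu+B\le A$, the second summand has order $B+\nu-1\le A-1$, strictly less than that of the first; as it is itself not identically zero, the identity forces it to be balanced by the third, so $t\not\equiv t_0$ and $C+\tau-1=B+\nu-1$. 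Together with $C\ge\nu+B$ this forces $\tau\le 0$, contradicting $\tau\ge 1$; this establishes (\ref{bdt_aGN3}).

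Note where the hypotheses of Lemma \ref{lm4.2} enter: only through inequality (\ref{bdt_aGN2}), whose derivation rests on the $\mu$-constancy of $G$ at $p_0$ — equivalently the emptiness of the relative polar curve $\Gamma_G$ near $(0,0,t_0)$ — via Theorem \ref{thrmLS} and the rescaling $G_N(y,z,t) = G(y,z^N,t)$ used in Lemma \ref{lm4.2}. Once (\ref{bdt_aGN2}) is available the remaining argument is purely formal; in particular the explicit expression $\partial_t G = -z^d g(1/z,y/z)$ is not needed. The main obstacle I anticipate is therefore not in the order comparison but in the Curve Selection Lemma step: one must set it up so that the resulting arc genuinely realizes the failure of the estimate, keeping the orders $A,B,C$ finite and under control.
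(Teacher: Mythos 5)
Your proof is correct and follows essentially the same route as the paper, which gives no independent argument but simply invokes the proof of Lemma 3.2 in [P]: your write-up is exactly that curve-selection-plus-order-comparison argument spelled out. The delicate points all check out — applying the Curve Selection Lemma to the set where the estimate fails (so that $z\,\partial G/\partial z$ is nonvanishing along the arc and the orders $\nu$, $B$ are finite), then combining $G\circ\gamma\equiv 0$ with (\ref{bdt_aGN2}) to force $\operatorname{ord}(t-t_0)\le 0$, contradicting $t(0)=t_0$.
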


\begin{theorem}\label{thm4.1} Let $F=f/g\colon \Bbb{C}^2\setminus\{g= 0\}\to \Bbb{C}$ be a rational function, where $f, g\in \Bbb{C}[x, y]$ and $\deg f> \deg g.$ Let $t_0\in \Bbb{C}\setminus (K_0(F)\cup K_1(F)\cup B_{\infty}(F))$. Then $F$ satisfies the Malgrange's condition at $t_0$.
\end{theorem}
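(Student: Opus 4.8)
The plan is to reduce the Malgrange condition at $t_0$ to the local estimates at the points at infinity established in Lemma~\ref{lm4.2} and Lemma~\ref{lm4.3}. Since $t_0\notin B_\infty(F)$ and $t_0\notin K_0(F)\cup K_1(F)$, Theorem~\ref{thrm4.2.7} tells us that no critical point of $l_t=L_{|F^{-1}(t)}$ escapes to infinity as $t\to t_0$; equivalently, by Lemma~\ref{lm4.2.6} and the discussion preceding the last Corollary, the Euler characteristic is locally constant near $t_0$, and hence by Lemma~\ref{lm4.2.13} we have $\mu_p(G(x,y,z,t))=\mu_p(G(x,y,z,t_0))$ for every $p\in V_\infty$ and all $t$ near $t_0$. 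This is exactly the hypothesis needed to invoke Lemma~\ref{lm4.2} and Lemma~\ref{lm4.3} at each point $p_0\in V_\infty$.

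Suppose, for contradiction, that $t_0\in K_\infty(F)$: there is a sequence $(x_k,y_k)\to\infty$ with $F(x_k,y_k)\to t_0$ and $\|(x_k,y_k)\|\cdot\|\grad F(x_k,y_k)\|\to 0$. First I would pass to the projective picture: the accumulation point at infinity of $(x_k,y_k)$ is a point $p_0\in\overline{V_{t_0}}\cap H_\infty=V_\infty$, since $F(x_k,y_k)\to t_0$ forces $(x_k,y_k)$ to approach the curve $\{f-t_0g=0\}$. After a linear change of coordinates assume $p_0=[1:0:0]$, so $(y,z)$ with $z=1/x$, $y=y/x$ are local coordinates and $G(y,z,t)=G(1,y,z,t)$ as in the excerpt; the sequence becomes $(y_k,z_k,t_k)\to(0,0,t_0)$ lying (asymptotically) on $\{G(y,z,t)=0\}$. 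The next step is the translation of the Malgrange condition on $F$ into a statement about $G$: one computes $\grad F$ in terms of $\partial G/\partial y,\partial G/\partial z,\partial G/\partial t$ — using $F=f/g$, the relation $\partial G/\partial t=-z^dg(1/z,y/z)$ already recorded in the proof of Lemma~\ref{lm4.2}, and the Euler-type relation tying $z\,\partial G/\partial z$ to the radial derivative — so that $\|(x,y)\|\cdot\|\grad F\|\to 0$ near $p_0$ becomes, up to a nonvanishing factor (here $d>\deg g$ keeps $\partial G/\partial t$ under control and prevents degeneration), an estimate of the form
$$
\Bigl\|\Bigl(\tfrac{\partial G}{\partial y},\,z\tfrac{\partial G}{\partial z}\Bigr)\Bigr\| \;\ll\; \Bigl|\tfrac{\partial G}{\partial t}\Bigr|
$$
along $(y_k,z_k,t_k)$.

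But this directly contradicts the chain of inequalities we already have: Lemma~\ref{lm4.2} together with the Curve Selection Lemma gives $|\partial G/\partial t|\le C\|(\partial G/\partial y,\,z\,\partial G/\partial z)\|$ near $(0,t_0)$, i.e. $\partial G/\partial t$ is \emph{dominated} by $\|(\partial G/\partial y,z\partial G/\partial z)\|$, the opposite of what a Malgrange-violating sequence would produce. (Lemma~\ref{lm4.3} then even lets one discard the $z\,\partial G/\partial z$ term, but the inequality~\eqref{bdt_aGN2} alone suffices.) Since $p_0\in V_\infty$ was arbitrary and $V_\infty$ is finite, this handles every way a sequence can run to infinity, so no Malgrange sequence exists and $t_0\notin K_\infty(F)$. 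The main obstacle I anticipate is purely bookkeeping in the intermediate step: writing $\|(x,y)\|\cdot\|\grad(f/g)\|$ cleanly in the chart $(y,z)$, tracking the powers of $z$ produced by the Jacobian of $(x,y)\mapsto(y,z)$ and by homogenization, and checking that the conversion factors are bounded away from $0$ and $\infty$ along the sequence — this is where $\deg f>\deg g$ is used essentially, exactly as in the proof of Lemma~\ref{lm4.2}. Once the dictionary between $\grad F$ and the partials of $G$ is set up, the contradiction with~\eqref{bdt_aGN2} is immediate.
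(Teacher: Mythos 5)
Your proposal is correct and follows essentially the same route as the paper: the constancy of $\chi(F^{-1}(t))$ near $t_0$ yields, via Lemma \ref{lm4.2.13}, the Milnor-number constancy at each point of $V_\infty$ needed to invoke Lemma \ref{lm4.2}, and the resulting inequality (\ref{bdt_aGN2}) is then translated in the affine chart into the Malgrange estimate. The only immaterial differences are that you argue by contradiction where the paper directly derives the bound $0<1/C\leqslant \|(X, Y)\|\cdot \|\grad F(X, Y)\|$, and that you note (\ref{bdt_aGN2}) alone suffices whereas the paper also uses Lemma \ref{lm4.3} to discard the $z\,\partial G/\partial z$ term.
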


\begin{proof}
Let $D$ be the neighbourhood of $t_0$ such that
$$\chi (F^{-1}(t_0))= \chi (F^{-1}(t)), \forall t\in D.$$
 According to Lemma \ref{lm4.2.13}, for all $p\in V_{\infty}$ and $t\in D$, either $p$ is a regular point of $\bar{V_{t_0}}$ or is a singular point and $\mu_p(G(x, y, z, t))= \mu_p(G(x, y, z, t_0))$. 

Let $p\in V_{\infty}$  arbitrary. Without loss of generality, we may assume that $p= [1, 0, 0]$. It follows from inequalities (\ref{bdt_aGN2}) and (\ref{bdt_aGN3}) that
$$|\partial G/ \partial t| \leqslant C |\partial G/ \partial y(y, z, t)|$$
for all $(y, z, t) \in \{G(y, z, t) = 0\}$ near $(0, 0, t_0)$ enough, where $G(y, z, t) = z^d (f(1/z, y/z) - tg(1/z, y/z)).$ We have
$$|z\cdot g(1/z, y/z)|\leqslant |\frac{\partial }{\partial Y}(f-tg)(1/z, y/z)|$$
for all $(y, z, t)$ near $(0, 0, t_0)$ enough such that $(f-tg)(1/z, y/z)=0$.

Now, set $z=\frac{1}{X}$ and $y= \frac{Y}{X}$, we obtain
$$0< 1/C \leqslant \|(X, Y)\|\cdot \|\grad F(X, Y)\|$$
for all $(X, Y)\to \infty$ and $F(X, Y)\to t_0$. Thus $F$ satisfies the Malgrange's condition at $t_0$.
\end{proof}
Now we consider the M-tameness of $F$. Firstly, we prove the following.

\begin{theorem}\label{thrm4.2}
If $F$ satisfies the Malgrange's condition at a value $t_0$, then $F$ is M-tame at $t_0$.
\end{theorem}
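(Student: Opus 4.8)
The plan is to prove the contrapositive: if $t_0\in M_\infty(F)$ then $t_0\in K_\infty(F)$, i.e. $F$ fails the Malgrange condition at $t_0$. Since the partial derivatives of $F$ are rational functions, for each $R,\varepsilon>0$ the set
$$\Sigma(R,\varepsilon):=\{p\in\mathbb C^2 : \|p\|>R,\ |F(p)-t_0|<\varepsilon,\ \grad F(p)\in\mathbb C\cdot p\}$$
is semialgebraic, and the assumption $t_0\in M_\infty(F)$ says exactly that $\infty$ lies in the closure of $\Sigma(R,\varepsilon)$ for all $R,\varepsilon$. By the Curve Selection Lemma at infinity I obtain a real-analytic arc $p(s)=(x(s),y(s))$, $0<s<\eta$, with $\|p(s)\|\to\infty$ as $s\to 0^+$, with $F(p(s))\to t_0$, and with $\grad F(p(s))=\lambda(s)\,p(s)$ for a function $\lambda(s)$ which — together with $x(s),y(s)$ — admits a Puiseux expansion in $s$. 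Put $a:=-\operatorname{ord}_s\|p(s)\|>0$, so that $\operatorname{ord}_s\|p(s)\|^2=-2a$.

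The crux is to differentiate $F$ along the arc. Since $F$ is holomorphic,
$$\frac{d}{ds}F(p(s))=\frac{\partial F}{\partial x}(p(s))\,\dot x(s)+\frac{\partial F}{\partial y}(p(s))\,\dot y(s),$$
and the proportionality $\grad F(p(s))=\lambda(s)p(s)$ — with the convention (standard in this context) that $\grad F=\big(\overline{\partial F/\partial x},\,\overline{\partial F/\partial y}\big)$, so that $\partial F/\partial x(p(s))=\overline{\lambda(s)}\,\overline{x(s)}$ and likewise for $y$ — turns this into
$$\frac{d}{ds}F(p(s))=\overline{\lambda(s)}\big(\overline{x(s)}\,\dot x(s)+\overline{y(s)}\,\dot y(s)\big).$$
A look at leading terms shows $\operatorname{ord}_s\big(\overline{x(s)}\dot x(s)+\overline{y(s)}\dot y(s)\big)=-2a-1$: the lowest-order contribution is $-a$ times a strictly positive real number (a sum of squared moduli of leading Puiseux coefficients) times $s^{-2a-1}$, so no cancellation occurs. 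This is precisely where the Hermitian pairing is needed in place of the bilinear form $x\dot x+y\dot y$, which could vanish to higher order along an isotropic direction. On the other hand, $F(p(s))\to t_0$ forces $\operatorname{ord}_s\big(F(p(s))-t_0\big)>0$, hence $\operatorname{ord}_s\frac{d}{ds}F(p(s))>-1$ (the degenerate case $F(p(s))\equiv t_0$ forces $\lambda\equiv 0$, and then the conclusion below is immediate). Comparing orders on the two sides of the displayed identity gives $\operatorname{ord}_s\lambda(s)-2a-1>-1$, i.e. $\operatorname{ord}_s\lambda(s)>2a$.

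Finally, $\|\grad F(p(s))\|=|\lambda(s)|\,\|p(s)\|$, so
$$\|p(s)\|\cdot\|\grad F(p(s))\|=|\lambda(s)|\,\|p(s)\|^2$$
has order $\operatorname{ord}_s\lambda(s)-2a>0$ and therefore tends to $0$ as $s\to 0^+$. Choosing $s_k\to 0^+$ and $p_k:=p(s_k)$ produces a sequence with $p_k\to\infty$, $F(p_k)\to t_0$ and $\|p_k\|\cdot\|\grad F(p_k)\|\to 0$; hence $t_0\in K_\infty(F)$, which is the contrapositive of the statement. I expect the genuine technical content to lie in the two inputs flagged above: applying the Curve Selection Lemma at infinity to $\Sigma(R,\varepsilon)$ and checking the Puiseux property of $\lambda(s)$, and the order computation for the Hermitian expression $\overline{x}\dot x+\overline{y}\dot y$ (being careful not to use the bilinear $x\dot x+y\dot y$); everything else is bookkeeping of orders. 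Note that this argument uses neither $\deg f>\deg g$ nor the exclusion of $K_0(F)\cup K_1(F)$, so the implication ``Malgrange $\Rightarrow$ M-tame'' holds with no additional hypotheses.
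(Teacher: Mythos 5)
Your proposal is correct and follows essentially the same route as the paper: pass to the contrapositive, apply the Curve Selection Lemma at infinity to produce a real-analytic arc $p(\tau)\to\infty$ with $\operatorname{grad}F(p(\tau))=\lambda(\tau)p(\tau)$ and $F(p(\tau))\to t_0$, and then compare orders of $\frac{d}{d\tau}F(p(\tau))$ and $\lambda(\tau)\|p(\tau)\|^2$ to conclude $\|p\|\cdot\|\operatorname{grad}F\|\to 0$. Your write-up simply makes explicit the Hermitian pairing and the no-cancellation order count for $\overline{x}\dot{x}+\overline{y}\dot{y}$, details the paper leaves implicit.
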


\begin{proof}
Assume that $F$ is not M-tame at $t_0$, i.e. there are sequences $\{p_k\}_k$ and $\{\lambda_k\}_k$ such that
$$p_k \rightarrow \infty, F(p_k) \rightarrow t_0 \, \,  \textrm{and} \, \,  \grad F(p_k)=\lambda _k p_k.$$
We will show that $F$ does not satisfy the Malgrange's condition at $t_0$.

Indeed, By the Curve Selection Lemma, there exist some real analytic curves $(x(\tau), y(\tau))\to \infty$ and $\lambda(\tau), \tau \in (0, \epsilon)$ such that $ \mathrm{grad}F(x(\tau), y(\tau))= \lambda(\tau)(x(\tau), y(\tau))$ and $F(x(\tau), y(\tau))\to t_0$ when $\tau \to 0$. 

For each analytic curve $\phi(\tau) = c \tau^m +$ {\it higher powers} ($c\neq 0$),  we denote $\deg(\phi(\tau))= m.$  If $\phi(\tau)$ and $\rho (\tau)$ are two analytic curves then we define  $\deg(\frac{\phi(\tau)}{\rho (\tau)}) = \deg(\phi(\tau)) - \deg(\rho (\tau))$. 

Since $F(x(\tau), y(\tau))\to t_0$, then $\deg(f(x(\tau), y(\tau))) = \deg(g(x(\tau), y(\tau)))$. Hence $\deg F^{'}(x(\tau), y(\tau)) > -1$. Thus
$$\deg (<(x(\tau), y(\tau)), \mathrm{grad} F >) > 0 \ \ \textrm{ and }\ \ \ \|(x(\tau), y(\tau)) \|\cdot \|\mathrm{grad} F\| \to 0.$$
Therefore $F$ does not satisfy the Malgrange's condition at $t_0$.

\end{proof}

The main theorem in this section is the following.

\begin{theorem}\label{main2}
Let $F=f/g\colon \Bbb{C}^2\setminus\{g= 0\}\to \Bbb{C}$ be a rational function, where $f, g\in \Bbb{C}[x, y]$ and $\deg f> \deg g.$ Let $t_0\in \Bbb{C}\setminus (K_0(F)\cup K_1(F))$. Then, the followings are equivalent

(i) $t_0 \in B_{\infty}(F);$

(ii)  $t_0 \in K_\infty(F);$

(iii) $t_0 \in M_{\infty}(F).$

\end{theorem}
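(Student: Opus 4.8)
The plan is to prove the chain of implications (i) $\Rightarrow$ (ii) $\Rightarrow$ (iii) $\Rightarrow$ (i), assembling results already established in the paper. The implication (ii) $\Rightarrow$ (iii) is exactly Theorem \ref{thrm4.2} (Malgrange condition implies M-tameness), since $t_0 \in K_\infty(F)$ means precisely that $F$ satisfies the Malgrange condition at $t_0$, wait — I must be careful with the direction of the set-membership. Recall $t_0 \notin K_\infty(F)$ is the Malgrange condition; so I should phrase the contrapositive: if $t_0 \notin M_\infty(F)$ then $t_0 \notin K_\infty(F)$, which is the contrapositive of Theorem \ref{thrm4.2}. Hence (iii) $\Rightarrow$ (ii) is immediate. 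Similarly, the contrapositive of Theorem \ref{thm4.1} says: if $t_0 \notin B_\infty(F)$ (and $t_0 \notin K_0(F) \cup K_1(F)$), then $t_0 \notin K_\infty(F)$; equivalently (ii) $\Rightarrow$ (i). So the only genuinely new content is the implication (i) $\Rightarrow$ (iii), i.e. that $t_0 \in B_\infty(F)$ forces $t_0 \in M_\infty(F)$.

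To prove (i) $\Rightarrow$ (iii), I would argue by contraposition: assume $t_0 \notin M_\infty(F)$ and show $t_0 \notin B_\infty(F)$. By the already-noted contrapositive of Theorem \ref{thrm4.2}, $t_0 \notin M_\infty(F)$ gives $t_0 \notin K_\infty(F)$, i.e. $F$ satisfies the Malgrange condition at $t_0$. Now I want to run the argument of Theorem \ref{thm4.1} in reverse: Malgrange at $t_0$, together with $t_0 \notin K_0(F) \cup K_1(F)$, should give $t_0 \notin B_\infty(F)$. Concretely, one translates the Malgrange inequality $\|(X,Y)\|\cdot\|\grad F(X,Y)\| \gtrsim 1$ near infinity into the bound $|z\,\partial G/\partial z| \ll |\partial G/\partial y|$ along $\{G=0\}$ near each point $p \in V_\infty$, and hence — using the relative polar curve criterion of Theorem \ref{thrmLS} applied to the functions $G_N(y,z,t) = G(y,z^N,t)$ as in Lemma \ref{lm4.2} — one recovers $\mu_p(G(y,z,t)) = \mu_p(G(y,z,t_0))$ for all $t$ near $t_0$ and all $p \in V_\infty$. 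Combined with $t_0 \notin K_1(F)$ (which controls the singularities along $A(F)$), Lemma \ref{lm4.2.13} then yields $\chi(F^{-1}(t)) = \chi(F^{-1}(t_0))$ for $t$ near $t_0$, and Theorem \ref{thrm4.2.10} (or the criterion $q_k(t_0)\neq 0$ via Theorem \ref{thrm4.1}) gives $t_0 \notin B_\infty(F)$.

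The main obstacle I anticipate is the reverse passage from the Malgrange inequality to the equimultiplicity $\mu_p(G_t) = \mu_p(G_{t_0})$ along $V_\infty$: Lemma \ref{lm4.2} and Lemma \ref{lm4.3} are stated as one-directional consequences of equimultiplicity, and here I need the converse. The cleanest route is to observe that the Malgrange condition at $t_0$, expressed in the projective chart $(y,z)$ near $p_0 = [1:0:0]$ with $G(y,z,t) = z^d(f(1/z,y/z) - t g(1/z,y/z))$, forbids the existence of a sequence $(y_k,z_k,t_k)\to(0,0,t_0)$ on the relative polar curve $\Gamma_G$ (since on $\Gamma_G$ one has $\partial G/\partial y = \partial G/\partial z = 0$ with $\partial G/\partial t = -z^d g(1/z,y/z)\neq 0$, forcing $\|\grad F\|\cdot\|(X,Y)\|\to 0$ after the substitution $z = 1/X$, $y = Y/X$, contradicting Malgrange). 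Thus $\Gamma_G \cap B = \emptyset$ near $(0,0,t_0)$, and Theorem \ref{thrm4.2.9}(iii)$\Leftrightarrow$(ii) delivers the equimultiplicity. One must handle separately the harmless case where $p_0$ is a regular point of $\overline{V_{t_0}}$, and must check that the hypothesis $\deg f > \deg g$ is genuinely used to guarantee $z_k \neq 0$ (exactly as in the proof of Lemma \ref{lm4.2}). After that, the rest is bookkeeping with Lemma \ref{lm4.2.13} and Theorem \ref{thrm4.2.10}.

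\begin{proof}
By Theorem \ref{thrm4.2}, if $F$ satisfies the Malgrange condition at $t_0$ then $F$ is M-tame at $t_0$; equivalently, $t_0 \in M_\infty(F)$ implies $t_0 \in K_\infty(F)$, which is the implication (iii) $\Rightarrow$ (ii). By Theorem \ref{thm4.1}, $t_0 \notin K_0(F)\cup K_1(F)\cup B_\infty(F)$ implies that $F$ satisfies the Malgrange condition at $t_0$, i.e.\ $t_0 \notin K_\infty(F)$; since $t_0 \notin K_0(F)\cup K_1(F)$ by hypothesis, this is the implication (ii) $\Rightarrow$ (i).

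It remains to prove (i) $\Rightarrow$ (iii). We argue by contraposition and assume $t_0 \notin M_\infty(F)$. By the contrapositive of Theorem \ref{thrm4.2} just noted, $t_0 \notin K_\infty(F)$, so $F$ satisfies the Malgrange condition at $t_0$: there is a constant $C>0$ with
$$0 < 1/C \leqslant \|(X, Y)\|\cdot \|\grad F(X, Y)\|$$
for all $(X,Y)\to\infty$ with $F(X,Y)\to t_0$.

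Fix $p \in V_\infty$; without loss of generality $p = [1:0:0]$, with local coordinates $(y,z)$ and $G(y,z,t) = z^d\bigl(f(1/z, y/z) - t\,g(1/z, y/z)\bigr)$. If $p$ is a regular point of $\overline{V_{t_0}}$ there is nothing to prove at $p$. Otherwise $p$ is an isolated singular point of each $\overline{V_t}$ for $t$ near $t_0$, and we claim the relative polar curve
$$\Gamma_G = \overline{\{(y,z,t) : \partial G/\partial t \neq 0,\ \partial G/\partial y = \partial G/\partial z = 0\}}$$
is empty in a neighbourhood of $(0,0,t_0)$. Indeed, suppose $(y_k, z_k, t_k) \to (0,0,t_0)$ lies on $\Gamma_G$. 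Then
$$\frac{\partial G}{\partial t}(y_k, z_k, t_k) = -z_k^d\, g(1/z_k, y_k/z_k) \neq 0,$$
so $z_k \neq 0$ (here $d = \deg f > \deg g$ is used, as in Lemma \ref{lm4.2}). Setting $X_k = 1/z_k$ and $Y_k = y_k/z_k$, we have $(X_k, Y_k) \to \infty$ and $F(X_k, Y_k) \to t_0$, and the vanishing of $\partial G/\partial y$ and $\partial G/\partial z$ at $(y_k,z_k,t_k)$ forces $\grad F(X_k, Y_k) = 0$ (the partials of $G$ are the homogenizations of those of $f - t g$, and a direct computation as in the proof of Theorem \ref{thm4.1} shows $\|(X_k,Y_k)\|\cdot\|\grad F(X_k,Y_k)\| \to 0$), contradicting the Malgrange inequality. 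Hence $\Gamma_G \cap B = \emptyset$ for a small ball $B$ about $(0,0,t_0)$.

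By the equivalence (iii) $\Leftrightarrow$ (ii) of Theorem \ref{thrmLS} applied to $H = G$, emptiness of $\Gamma_G$ near $(0,0,t_0)$ gives $\mu_p(G(y,z,t)) = \mu_p(G(y,z,t_0))$ for all $t$ near $t_0$. As $p \in V_\infty$ was arbitrary, there is a neighbourhood $D$ of $t_0$ with $\mu_p(G(x,y,z,t)) = \mu_p(G(x,y,z,t_0))$ for all $p \in V_\infty$ and $t \in D$. Since also $t_0 \notin K_0(F)\cup K_1(F)$, Lemma \ref{lm4.2.13} (together with Lemma \ref{lm4.2.6} and Proposition \ref{prop4.2.14}) yields $\chi(F^{-1}(t)) = \chi(F^{-1}(t_0))$ for all $t \in D$. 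By Theorem \ref{thrm4.2.10} (the implication (iii) $\Rightarrow$ (i) in the negated form) this forces $t_0 \notin B_\infty(F)$. This proves the contrapositive of (i) $\Rightarrow$ (iii), and the theorem follows.
\end{proof}
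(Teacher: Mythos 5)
Your handling of (iii) $\Rightarrow$ (ii) (via Theorem \ref{thrm4.2}) and of (ii) $\Rightarrow$ (i) (via Theorem \ref{thm4.1}) matches the paper exactly. The problem is the remaining implication (i) $\Rightarrow$ (iii), which is the only part requiring a new argument, and your treatment of it has two genuine gaps. First, a logical one: from $t_0\notin M_\infty(F)$ you deduce $t_0\notin K_\infty(F)$ ``by the contrapositive of Theorem \ref{thrm4.2}.'' But Theorem \ref{thrm4.2} states $t_0\notin K_\infty(F)\Rightarrow t_0\notin M_\infty(F)$, whose contrapositive is $t_0\in M_\infty(F)\Rightarrow t_0\in K_\infty(F)$; what you invoke is the \emph{converse}, which is exactly the implication (ii) $\Rightarrow$ (iii) of the theorem being proved, so the step is circular. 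Indeed, nowhere in your argument is a sequence with $\grad F(x_k)=\lambda_k x_k$ ever produced, so membership in $M_\infty(F)$ is never actually forced by anything.

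Second, even granting the Malgrange condition at $t_0$, the step ``Malgrange $\Rightarrow\Gamma_G\cap B=\emptyset$ near $(0,0,t_0)$'' is not bookkeeping but the hard (Parusi\'nski-type) direction, and the specific justifications you offer fail. On $\Gamma_G$ one has $\partial G/\partial y=\partial G/\partial z=0$ but $G$ itself need not vanish; writing $h_t=f-tg$, these equations yield only $(h_t)_Y(X,Y)=0$ and $(h_t)_X(X,Y)=dz\,h_t(X,Y)$ at $(X,Y)=(1/z,y/z)$, so $\grad F(X_k,Y_k)$ is in general nonzero, contrary to your claim. Worse, $F(X_k,Y_k)-t_k=-G/(\partial G/\partial t)$, and both numerator and denominator tend to $0$ as $(y_k,z_k,t_k)\to(0,0,t_0)$ (the denominator is $-z^d g(1/z,y/z)$ and $d>\deg g$), so it is not established that $F(X_k,Y_k)\to t_0$ along a polar branch; if the limit is not $t_0$, the Malgrange condition at $t_0$ says nothing about that branch and no contradiction arises. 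The paper sidesteps all of this by proving (i) $\Rightarrow$ (iii) directly: assuming M-tameness at $t_0$, it constructs on $F^{-1}(D_\delta(t_0))\setminus B_R$ a smooth vector field $v$ with $\langle v,x\rangle=0$ and $\langle v,\grad F\rangle=1$ (possible precisely because $\grad F$ is nowhere parallel to $x$ there) and integrates it to trivialize $F$ at infinity, giving $t_0\notin B_\infty(F)$. You should replace your (i) $\Rightarrow$ (iii) with an argument of this kind.
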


\begin{proof}
(i) $\Longrightarrow $ (iii): Assume that $F$ is M-tame at $t_0$. Then for $\delta >0$ small enough and $R>0$ large enough, we can construct in $F^{-1}(D_{\delta}(t_0))\setminus B_R$ a smooth vector field $v(x)$ such that
$$
\begin{aligned}
<& v, x> = 0;\\
<& v, \mathrm{grad} F>= 1.
\end{aligned}
$$
Where $B_R$ is the close ball in $\mathbb{C}^2$ with radius $R$ centered at the origin. 

Now, by integrating the vector field  we get a diffeomorphism trivializing the map
$$F: F^{-1}(D_\delta(t_0))\setminus B_R \to D_\delta(t_0).$$

(ii) $\Longrightarrow $ (i): By Theorem \ref{thm4.1}. 

(iii) $\Longrightarrow $ (ii): By Theorem \ref{thrm4.2}.
\end{proof}

\begin{remark}
{\rm Theorem \ref{main2} remains valid if $\deg f< \deg g$ and $t_0\neq 0$.
}
\end{remark}
\subsection{Examples}
To conclude we give some examples showing that the Fedoryuk condition is not necessary for a value to be regular at infinity. 

\begin{example}{\rm
Let $F(x, y)=\frac{xy+1}{x^2+1}$ and $L:\Bbb{C}^2\to \Bbb{C}, (x, y)\mapsto y$. 

The critical points of $l_t$ are $(x, y)$, where $x=\sqrt{(1-1/t)}, y=2x/(1-x^2)$. It is easy to check that these points do not go to infinity when $t\to i$. According to Theorem \ref{thrm4.1} we have $i \notin B_{\infty}(F).$

Now let $(x_k, y_k)=(k, ik)$. We see that $\|(x_k, y_k)\|\to \infty, F((x_k, y_k))\to i$ and $\|\textrm{grad}F(x_k, y_k)\|\to~ 0$ as $k \to \infty.$
That means $i\in \widetilde{K}_\infty(F).$ Thus $\widetilde{K}_\infty(F)\not\subset B_{\infty}(F).$}
\end{example}

\begin{example}{\rm
Let $F(x, y)=\frac{x^3+1}{xy+1}.$
It is easy to check that $B_{\infty}(F)= \{0\}.$ Let $p_k := (k, \frac{1}{c}k^2) \in {\Bbb C}^2, c \ne 0, k \ge 1,$ we see that $F(p_k) \to c$ as $k\to \infty,$ and $\textrm{grad}F(p_k)\to 0.$ Therefore $\widetilde{K}_\infty(F) = \mathbb{C}.$ In particular, $B_{\infty}(F) \neq \widetilde{K}_\infty(F)$.
}
\end{example}

\section*{Acknowledgments} The author would like to thank Professor Ha Huy Vui for useful discussions. The author would like to thank the referee(s) for carefully examining our paper and providing many valuable comments.

\end{document}